\documentclass[a4paper,12pt]{article}
\textwidth6.5in\hoffset-.75in
\usepackage{latexsym, amssymb, amsfonts,amsmath}
\usepackage{amssymb,latexsym}
\usepackage{amsfonts}
\usepackage{amsmath,amsthm,graphicx}
\newcommand{\be}{\begin{equation}}
\newcommand{\ee}{\end{equation}}
\newcommand{\bea}{\begin{eqnarray}}
\newcommand{\eea}{\end{eqnarray}}
\newcommand{\bean}{\begin{eqnarray*}}
\newcommand{\eean}{\end{eqnarray*}}
\newcommand{\brray}{\begin{array}}
\newcommand{\erray}{\end{array}}
\newcommand{\ben}{\begin{equation}{nonumber}}
\newcommand{\een}{\end{equation}{nonumber}}

\newtheorem{dfn}{Definition}[section]
\newtheorem{thm}[dfn]{Theorem}
\newtheorem{lmma}[dfn]{Lemma}

\newtheorem{ppsn}[dfn]{Proposition}
\newtheorem{crlre}[dfn]{Corollary}
\newtheorem{xmpl}[dfn]{Example}
\newtheorem{rmrk}[dfn]{Remark}
\newcommand{\bdfn}{\begin{dfn}}
\newcommand{\bthm}{\begin{thm}}

\numberwithin{equation}{section}

\newcommand{\blr}{\begin{list}{$($\roman{cnt1}$)$} {\usecounter{cnt1}
        \setlength{\topsep}{0pt} \setlength{\itemsep}{0pt}}}
\newcommand{\bla}{\begin{list}{$($\alph{cnt2}$)$} {\usecounter{cnt2}
       \setlength{\topsep}{0pt} \setlength{\itemsep}{0pt}}}
\newcommand{\bln}{\begin{list}{$($\arabic{cnt3}$)$} {\usecounter{cnt3}
                \setlength{\topsep}{0pt} \setlength{\itemsep}{0pt}}}
\newcommand{\el}{\end{list}}

\newcommand{\blmma}{\begin{lmma}}
\newcommand{\bppsn}{\begin{ppsn}}
\newcommand{\bcrlre}{\begin{crlre}}
\newcommand{\bxmpl}{\begin{xmpl}}
\newcommand{\brmrk}{\begin{rmrk}}
\newcommand{\edfn}{\end{dfn}}
\newcommand{\ethm}{\end{thm}}
\newcommand{\elmma}{\end{lmma}}

\newcommand{\eppsn}{\end{ppsn}}
\newcommand{\ecrlre}{\end{crlre}}
\newcommand{\exmpl}{\end{xmpl}}
\newcommand{\ermrk}{\end{rmrk}}

\def\a*{{\cal A}_{h,*}}
\def\B{{\cal B}(h)}
\def\B1{{\cal B}_1(h)}
\def\b{{\cal B}^{\rm s.a.}(h)}
\def\b1{{\cal B}^{\rm s.a.}_1(h)}

\newcommand{\suml}{\sum \limits}
\newcommand{\itt}{\int \limits}



\begin{document}

 \begin{center}
 \Large{\bf{Koplienko Trace Formula}}\\
 \vspace{0.15in}
{\large Arup Chattopadhyay {\footnote {J.N.Centre for Advanced Scientific Research, Bangalore-560064, INDIA; ~~arup@jncasr.ac.in}} }
{\large and ~~~Kalyan B. Sinha {\footnote { J.N.Centre for Advanced Scientific Research and Indian Institute of Science, Bangalore-560064, INDIA; kbs@jncasr.ac.in}}}\\
\end{center}
\vspace{0.15in}
\begin{abstract}
In (\cite{kopleinko}), Koplienko gave a trace formula for perturbations of self-adjoint operators by operators of Hilbert-Schmidt class $\mathcal{B}_2(\mathcal{H})$.
Recently Gesztesy, Pushnitski and Simon (\cite{gesztesykop}) gave an alternative proof of the trace formula when the operators involved are bounded. 
In this article, we give a still another proof and extend the formula for unbounded case by reducing the problem to a finite dimensional one as in 
the proof of Krein trace formula by Voiculescu (\cite{Voiculescu}), Sinha and Mohapatra (\cite{Sinhamohapatra}).
\end{abstract}
\section{Introduction.}\label{sec: intro}
\emph{ \textbf{Notations:}} Here, $\mathcal{H}$ will denote the separable Hilbert space we work in; $\mathcal{B}(\mathcal{H})$, $\mathcal{B}_1(\mathcal{H})$,
$\mathcal{B}_2(\mathcal{H})$ the set of bounded, trace class, Hilbert-Schmidt class operators in $\mathcal{H}$ respectively with  $\|.\|, \|.\|_1, \|.\|_2$ as 
the associated norms. Let $H$ and $H_0$ be a pair of self-adjoint operators in $\mathcal{H}$ with $\sigma(H)$, $\sigma(H_0)$ as 
their spectra and $E(\lambda)$, $E_{0}(\lambda)$ the spectral families; and let $Dom(A)$, $Tr A$ be the domain of the operator $A$ and the trace 
of a trace class operator $A$ respectively.

Furthermore, if we assume that  $V\equiv H-H_0 \in \mathcal{B}_1(\mathcal{H})$, then Krein (\cite{mgkrein}) proved that there exists a unique real-valued $L^1(\mathbb{R})$- function $\xi$ 
with support in the interval $[a,b]$ ( where $a=min\{\inf \sigma(H), \inf \sigma(H_0)\}$ and $b=max\{\sup \sigma(H), \sup \sigma(H_0)\}$ ) such that 
\begin{equation}\label{eq: kreinintro}
 Tr\left[\phi\left(H\right)-\phi\left(H_0\right)\right] = \int_{a}^{b} \phi^{\prime}(\lambda)\xi(\lambda)d\lambda,
\end{equation}
for a large class of functions $\phi$ . 
The function $\xi$ is known as Krein's spectral shift function and the relation \eqref{eq: kreinintro} is called
Krein's trace formula. The original proof of Krein uses analytic function theory. In 1985, Voiculescu approached the trace formula \eqref{eq: kreinintro} from a different
direction. If $H$ and $H_0$ are bounded, then Voiculescu (\cite{Voiculescu}) proved that
\begin{equation}
 Tr\left[p\left(H\right)-p\left(H_0\right)\right] = \lim_{n\longrightarrow \infty} Tr_n\left[p\left(H_n\right)-p\left(H_{0,n}\right)\right],
\end{equation}
by adapting the Weyl-von Neumann's theorem (where $p(.)$ is a polynomial and $H_n,H_{0,n}$ are finite dimensional approximations of $H$ and $H_0$ respectively and
$Tr_n$ is the associated finite dimensional trace). Then one constructs the spectral shift function in the finite dimensional case and finally the formula is extended to the $\infty$-dimensional case. Later Sinha
and Mohapatra (\cite{Sinhamohapatra}) extended Voiculescu's method to the unbounded self-adjoint and unitary cases (\cite{smunitary}).
 
One can think of \eqref{eq: kreinintro} as a \textquotedblleft Mean Value theorem under trace for self-adjoint operators \textquotedblright and then a   
natural question arises if one can have a mean-value theorem under trace up to the next order. Koplienko (\cite{kopleinko}) indeed provided such a formula.
Let $H$ and $H_0$ be two self-adjoint operators in $\mathcal{H}$ such that $H-H_0 \equiv V\in \mathcal{B}_2(\mathcal{H})$. In this case the difference $\phi(H) - \phi(H_0)$
is no longer of trace-class and one has to consider instead
\begin{equation*}
 \phi(H) - \phi(H_0) - D\phi(H_0) \bullet V
\end{equation*}
where $D\phi(H_0)$ denotes the Frechet derivative of $\phi$ at $H_0$ ( see \cite{Bhatia}) and find a trace formula for the above expression. 
Under the above hypothesis, Koplienko's formula asserts that there exists a unique function $\eta \in L^1(\mathbb{R})$ such that
\begin{equation}\label{eq: intkopeq}
 Tr\{\phi(H) - \phi(H_0) - D\phi(H_0) \bullet V\} = \int_{-\infty}^{\infty} \phi''(\lambda)\eta(\lambda)d\lambda
\end{equation}
for rational functions $\phi$ with poles off $\mathbb{R}$. In 2007,
Gesztesy, Pushnitski and Simon (\cite{gesztesykop}) gave an alternative proof of the formula \eqref{eq: intkopeq} for the bounded case and in 2009, Dykema and
Skripka (\cite{DykemmaSkripkka}, \cite{Skripkkaunbounded}) obtained the formula \eqref{eq: intkopeq} in the semi-finite von Neumann algebra setting.
\vspace{0.01in}

Here we revisit the proof of Koplienko's formula for bounded case and prove the unbounded self-adjoint case, we believe for the first time,  using the idea of finite dimensional approximation as in the
works of Voiculescu, Sinha and Mohapatra, referred earlier. The plan of the paper is to first prove in section 2, Koplienko formula when $ dim \mathcal{H}< \infty$; section 3 is devoted
to the reduction of the problem to finite dimensions and in section 4 we prove the trace formula for both cases, viz. when the pairs $(H_0, H)$ are bounded
or unbounded self-adjoint.

\section{Koplienko formula in finite dimension}\label{sec: second}
\begin{thm}\label{thm: finitekop}
Let $H$ and $H_0$ be two self-adjoint operators in a Hilbert space $\mathcal{H}$ such that $H - H_0 \equiv V$ and let ~$p(\lambda) = \lambda^r (r\geq 2)$.
\vspace{0.1in}

(i) Then ~~~~$Dp(H_0)\bullet V = \suml_{j=0}^{r-1} H_0^{r-j-1}VH_0^j$ ~~~~ and ~~~~~$\frac{d}{ds}(p(H_s)) = \suml_{j=0}^{r-1} H_s^{r-j-1}VH_s^j$, where 
$H_s = H_0 +sV ~~(0\leq s\leq 1).$
\vspace{0.1in}

(ii) If furthermore $ dim \mathcal{H}< \infty$, then there exists a unique non-negative $L^1(\mathbb{R})$-function $\eta$ such that
\begin{equation} 
 Tr\{p(H)-p(H_0)-Dp(H_0)\bullet V\} = \itt_a^b p''(\lambda)\eta(\lambda)d\lambda,
\end{equation}
for some $-\infty< a < b < \infty$, where $p(.)$ is any polynomial on $[a,b]$ with complex coefficients.
\vspace{0.1in}

Moreover, 
\be\label{eq: itaexp}
\eta(\lambda) = \itt_0^1 Tr\{V\left[E_0(\lambda) - E_s (\lambda)\right]\}ds, 
\ee
where $E_s(.)$ is the spectral family of the self-adjoint operator $H_s$ , and
\be 
\|\eta\|_1=\frac{1}{2}\|V\|_2^2.
\ee
\vspace{0.1in}

(iii) For $dim \mathcal{H}< \infty$,
\begin{equation}\label{eq: finiteexp}
 Tr\{e^{itH} - e^{itH_0} - D(e^{itH_0})\bullet V\} = (it)^2\int_a^b e^{it\lambda}\eta(\lambda)d\lambda,
\end{equation}
for some $-\infty< a < b < \infty$, $t\in \mathbb{R}$ and $\eta$ is given by \eqref{eq: itaexp}.

\end{thm}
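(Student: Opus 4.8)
The plan is to treat the three parts in order, reducing everything to the scalar function $\psi(s) := Tr[p(H_s)]$ with $H_s = H_0 + sV$. For part (i) I would expand $(H_0+\eps V)^r$ and collect the term linear in $\eps$: placing the single factor $V$ in slot $j+1$ and $H_0$ in the rest gives $H_0^{r-j-1}VH_0^{\,j}$, so $Dp(H_0)\bullet V = \suml_{j=0}^{r-1} H_0^{r-j-1}VH_0^{\,j}$; the identical Leibniz expansion of $\frac{d}{ds}H_s^r$ with $\frac{d}{ds}H_s = V$ gives the second identity. By linearity both identities extend from $p(\lambda)=\lambda^r$ to arbitrary polynomials, which is all I will use below.

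For the trace formula in (ii), the fundamental theorem of calculus together with part (i) gives $Tr\{p(H)-p(H_0)-Dp(H_0)\bullet V\} = \psi(1)-\psi(0)-\psi'(0)$, and cyclicity of the trace collapses $\psi'(s) = Tr[\suml_j H_s^{r-j-1}VH_s^{\,j}] = Tr[p'(H_s)V]$. The key step is to rewrite $\psi'(s)-\psi'(0)$ spectrally. Setting $\phi_s(\lambda) := Tr\{V[E_0(\lambda)-E_s(\lambda)]\}$, a step function in $\lambda$ that vanishes below and above the (finite) spectra, I would integrate by parts: the boundary terms vanish and $\itt_a^b p'(\lambda)\,d\phi_s(\lambda) = Tr[V(p'(H_0)-p'(H_s))]$ since $\itt p'\,dE_A = p'(A)$. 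This yields $\itt_a^b p''(\lambda)\phi_s(\lambda)\,d\lambda = \psi'(s)-\psi'(0)$, and integrating over $s\in[0,1]$ and exchanging the (finite) integrals by Fubini produces the trace formula with $\eta$ given by \eqref{eq: itaexp}.

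For the properties of $\eta$, the $L^1$-norm is a direct computation: from $\itt_a^b E_A(\lambda)\,d\lambda = bI - A$ for $\sigma(A)\subset(a,b)$ one gets $\itt_a^b [E_0(\lambda)-E_s(\lambda)]\,d\lambda = H_s-H_0 = sV$, hence $\itt_a^b \phi_s(\lambda)\,d\lambda = s\,Tr[V^2] = s\|V\|_2^2$ and $\|\eta\|_1 = \itt_0^1 s\|V\|_2^2\,ds = \half\|V\|_2^2$, once $\eta\geq 0$ is known. Non-negativity is the step I expect to be the main obstacle. Fixing $\lambda$, I would put $g(s) := Tr[(H_s-\lambda)_-]$ with $x_- = \min(x,0)$; since $x\mapsto(x-\lambda)_-$ is concave and $A\mapsto Tr[f(A)]$ is concave for concave $f$, the function $g$ is concave in $s$. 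Computing its (a.e.) derivative by Hellmann--Feynman on the eigenvalue branches, equivalently by the chain rule for the Lipschitz function $(x-\lambda)_-$, gives $g'(s) = Tr[VE_s(\lambda)]$, so that $\eta(\lambda) = g'(0)-(g(1)-g(0)) = -\big(g(1)-g(0)-g'(0)\big)\geq 0$ by concavity. Uniqueness then follows because $\{p'' : p \text{ polynomial}\}$ contains every monomial, so the formula fixes all moments $\itt \lambda^k\eta\,d\lambda$, and a compactly supported $L^1$ function is determined by its moments.

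Finally, part (iii) is obtained by approximation. With $p_N(x) = \suml_{k=0}^N \frac{(it)^k}{k!}x^k$ the Taylor polynomials of $e^{itx}$, one has $p_N''(x) = (it)^2 p_{N-2}(x)$, so applying (ii) to $p_N$ gives $Tr\{p_N(H)-p_N(H_0)-Dp_N(H_0)\bullet V\} = (it)^2\itt_a^b p_{N-2}(\lambda)\eta(\lambda)\,d\lambda$. Letting $N\to\infty$, I pass to the limit on the right by uniform convergence $p_{N-2}\to e^{it\,\cdot}$ on $[a,b]$ against $\eta\in L^1$, and on the left by noting that in finite dimensions the series for $e^{itH}$, $e^{itH_0}$ and, via part (i) term by term, for $D(e^{itH_0})\bullet V$ converge in norm, so the traces converge. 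This establishes \eqref{eq: finiteexp}.
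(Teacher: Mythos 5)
Your proposal is correct, and parts (i) and the main computation of (ii) follow essentially the same route as the paper (expand the power, use cyclicity of the trace to reduce to $\int_0^1 Tr[V(p'(H_s)-p'(H_0))]\,ds$, rewrite spectrally and integrate by parts; your derivation of $\|\eta\|_1=\frac{1}{2}\|V\|_2^2$ via $\int_a^b[E_0(\lambda)-E_s(\lambda)]\,d\lambda=sV$ is the same computation the paper does with $p(\lambda)=\lambda^2$). The genuine divergence is in the two remaining steps. For non-negativity of $\eta$, the paper uses Birman--Solomyak double operator integrals: it writes $\phi_{\epsilon,\lambda}(H_0)-\phi_{\epsilon,\lambda}(H_s)$ for a smoothed, non-increasing cut-off $\phi_{\epsilon,\lambda}$ as $-s\iint\frac{\phi_{\epsilon,\lambda}(\alpha)-\phi_{\epsilon,\lambda}(\beta)}{\alpha-\beta}\,Tr\{VE_0(d\alpha)VE_s(d\beta)\}$, observes the integrand is non-positive against the positive measure $Tr\{VE_0(d\alpha)VE_s(d\beta)\}$, and lets $\epsilon\to0$; this gives the stronger pointwise statement $Tr\{V[E_0(\lambda)-E_s(\lambda)]\}\ge0$ for every $s$. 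You instead prove only the $s$-integrated statement $\eta(\lambda)\ge0$ via concavity of $s\mapsto Tr[(H_s-\lambda)_-]$ and Hellmann--Feynman; this is more elementary in finite dimensions (no double spectral integrals needed), at the cost of invoking the concavity of $A\mapsto Tr f(A)$ for concave $f$ and of some almost-everywhere care at eigenvalue crossings and at $\lambda\in\sigma(H_0)$ --- all of which are harmless since $\eta$ is only an $L^1$ class. For (iii), the paper verifies $D(e^{itH_0})\bullet V=it\int_0^1e^{it\alpha H_0}Ve^{it(1-\alpha)H_0}\,d\alpha$ and repeats the computation of (ii) directly with $e^{it\lambda}$, whereas you approximate by Taylor polynomials $p_N$ and pass to the limit; both work, though your route requires the (easy, but worth stating) verification that $Dp_N(H_0)\bullet V$ converges to $D(e^{itH_0})\bullet V$. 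Note also that the double-operator-integral machinery the paper sets up here is reused in Section 4 to get the $\mathcal{B}_2$-bound $\|g(H_s)-g(H_0)\|_2\le s\|f\|_\infty\|V\|_2$, so your shortcut, while valid for this theorem, does not replace that later ingredient.
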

\begin{proof}
\emph{(i)} For $p(\lambda) = \lambda^r$($r\geq 2$),
\begin{equation}\label{eq: totalexpress}  
p(H_0+V) - p(H_0) = \sum_{j=0}^{r-1} H_0^{r-j-1}VH_0^j + \sum_{j=0}^{r-2}\sum_{k=0}^{r-j-2} H^{r-j-k-2}VH_0^{k}VH_0^j,
\end{equation}
and hence $$\|p(H_0+V) - p(H_0) - \sum_{j=0}^{r-1} H_0^{r-j-1}VH_0^j\| \leq \sum_{j=0}^{r-2}\sum_{k=0}^{r-j-2} \|H\|^{r-j-k-2}\|V\|\|H_0\|^{k}\|V\|\|H_0\|^j,$$
proving that $Dp(H_0)\bullet V =  \suml_{j=0}^{r-1} H_0^{r-j-1}VH_0^j$. A similar calculation shows that 
$$\frac{H_{s+h}^r - H_s^r}{h} = \suml_{j=0}^{r-1} \left[H_0+(s+h)V\right]^{r-j-1}VH_s^j,$$ which converges in operator norm to
$\suml_{j=0}^{r-1} H_s^{r-j-1}VH_s^j$ as $h\rightarrow 0$. 

\emph{(ii)} By using the cyclicity of trace and noting that the trace now is a finite sum, we have that for $p(\lambda) = \lambda^r$ ($r\geq 2$),
\begin{equation*}
\begin{split}
Tr\{p(H) - p(H_0) - Dp(H_0)\bullet V\}\\
& \hspace{-4cm} = Tr\left(\int_0^1 \frac{d}{ds}\left(p(H_s)\right)ds\right) - Tr\left(\sum_{j=0}^{r-1} H_0^{r-j-1}VH_0^j\right) \\
& \hspace{-4cm} = \int_0^1 rTr\left(VH_s^{r-1}\right)ds - \int_0^1 r Tr\left(VH_0^{r-1}\right)ds \\
& \hspace{-4cm} = Tr\left[ rV\int_0^1 ds \int_a^b \lambda^{r-1} \{E_s(d\lambda) - E_0(d\lambda)\}\right].\\
\end{split}
\end{equation*}
It is easy to see that there exists $a,b\in \mathbb{R} ~(-\infty <a <b < +\infty)$ such that $supp E_s(.) \subseteq [a,b]$ for all $s\in [0,1]$. By integrating by-parts
and noting that $E_s(.) - E_0(.) = 0$ for $\lambda =a,b$ , we have that
\begin{equation*}
\begin{split}
& Tr\{p(H) - p(H_0) - Dp(H_0)\bullet V\} \\
& = Tr\left[ rV \int_0^1 ds \left( \lambda^{r-1}\{E_s(\lambda) - E_0(\lambda)\}\mid_a^b - \int_a^b (r-1)\lambda^{r-2}\{E_s(\lambda) - E_0(\lambda)\}\right)\right]\\
& = \int_a^b r(r-1)\lambda^{r-2} \left( \int_0^1 Tr\{V[E_0(\lambda) - E_s(\lambda)]ds \right) d\lambda\\
& =\int_a^b p^{''}(\lambda) \eta(\lambda) d\lambda, \quad \text{where we have set} \quad  \eta(\lambda) = \itt_0^1 Tr\{V\left[E_0(\lambda) - E_s (\lambda)\right]\}ds. \\
\end{split}
\end{equation*}
To prove the positivity of ~$\eta(\lambda)$~, we use the idea of double spectral integrals, introduced by Birman-Solomyak (\cite{birmansolomyak}, \cite{birsolomyak}).
For fixed $\lambda$, and $\epsilon > 0$ define a smoothly non-increasing function $\phi_{\epsilon , \lambda}$ such that 
\begin{equation*}
\phi_{\epsilon, \lambda}(\alpha) =
\begin{cases}
0, &\text{if $\alpha \geq \lambda + \epsilon .$}\\
1, ~&\text{if $a\leq \alpha \leq \lambda$.}
\end{cases}
\end{equation*}
Therefore 
\begin{equation*}
\hspace{-5cm} \phi_{\epsilon, \lambda}(H_0) - \phi_{\epsilon, \lambda}(H_s) 
= \int_a^b \int_a^b \left[\phi_{\epsilon, \lambda}(\alpha) - \phi_{\epsilon, \lambda}(\beta)\right]E_0(d\alpha)E_s(d\beta)
\end{equation*}

\begin{equation}\label{eq: doubspec}
= -s \int_a^b \int_a^b \frac{\phi_{\epsilon, \lambda}(\alpha) - \phi_{\epsilon, \lambda}(\beta)}{\alpha - \beta} E_0(d\alpha)VE_s(d\beta) 
= -s \int_{[a,b]\times [a,b]} \frac{\phi_{\epsilon, \lambda}(\alpha) - \phi_{\epsilon, \lambda}(\beta)}{\alpha - \beta} \mathcal{G}(d\alpha \times d\beta).V,
\end{equation}
where $\mathcal{G} (\Delta \times \delta) X = E_0(\Delta)XE_s(\delta)$~~(~$X\in \mathcal{B}_2(\mathcal{H})$ and $ \Delta \times \delta \subseteq \mathbb{R} \times \mathbb{R})$
extends to a $\mathcal{B}_2(\mathcal{H})$- valued spectral measure in $\mathbb{R}^2$ with total $\mathcal{B}_2(\mathcal{H})$-variation less than or equal to 1. Thus
\begin{equation}\label{eq: dubspec}
Tr\{V\left[\phi_{\epsilon, \lambda}(H_0) - \phi_{\epsilon, \lambda}(H_s)\right]\}
= -s \int_a^b \int_a^b \frac{\phi_{\epsilon, \lambda}(\alpha) - \phi_{\epsilon, \lambda}(\beta)}{\alpha - \beta} Tr\{VE_0(d\alpha)VE_s(d\beta)\}.
\end{equation}
Since by construction, $\phi_{\epsilon, \lambda}$ is a non-increasing function, the integrand in \eqref{eq: dubspec} is non-positive and hence 
$$Tr\{V\left[\phi_{\epsilon, \lambda}(H_0) - \phi_{\epsilon, \lambda}(H_s)\right]\} \geq 0 ~~~~~~~~~~\forall ~~\lambda, \epsilon>0.$$
Furthermore, $\phi_{\epsilon, \lambda}(H_0)$ and $\phi_{\epsilon, \lambda}(H_s)$ converges strongly to $E_0(\lambda)$ and $E_s(\lambda)$ respectively 
as $\epsilon \rightarrow 0$, (spectral family is right continuous in our definition) and hence $$Tr\{V\left[E_0(\lambda) - E_s(\lambda)\right]\} \geq 0 \quad \text{for} \quad 0\leq s\leq 1.$$
Therefore $\eta(\lambda) \geq 0 \quad \text{for all} \quad  \lambda \in[a,b].$ The last conclusion is a consequence of the fact that 
$$\hspace{-2cm} \| \eta\|_1 = \int_a^b \eta(\lambda)d\lambda = \frac{1}{2} \int_a^b p''(\lambda) \eta(\lambda) d\lambda \quad \text{(where} \quad p(\lambda) = \lambda^2)$$
$$\hspace{-3.4cm} = \frac{1}{2} Tr\{H^2 - H_0^2 - D(H_0^2)\bullet V\} = \frac{1}{2} \|V\|_2^2.$$

\emph{(iii)} It is easy to verify that $$D(e^{itH_0})\bullet V = it\int_0^1 e^{it\alpha  H_0} V e^{it(1-\alpha)  H_0} d\alpha$$ and a calculation identical
to the one in $\emph{(ii)}$ shows that 
$$Tr\{e^{itH} - e^{itH_0} - D(e^{itH_0})\bullet V\} = it \int_0^1 ds Tr\{V\left(e^{itH_s} - e^{itH_o}\right)\} = (it)^2 \int_a^b e^{it\lambda}\eta(\lambda)d\lambda.$$
~~~~~~~~~~~~~~~~~~~~~~~~~~~~~~~~~~~~~~~~~~~~~~~~~~~~~~~~~~~~~~~~~~~~~~~~~~~~~~~~~~~~~~~~~~~~~~~~~~~~~~~~~~~~~~~~~~~~~~~~~~~~~~~~~~~~~~~~\end{proof}

\section{Reduction to finite dimension}\label{sec: third}

We begin with a proposition collecting some results, following from the  Weyl-von Neumann type construction.
\begin{ppsn}\label{prop: weylabs}
Let $A$ be a self-adjoint operator (possibly unbounded) in a separable infinite dimensional Hilbert space $\mathcal{H}$ and let $\{f_l\}_{1\leq l\leq L}$ be a set of normalized vectors in $\mathcal{H}$
and $\epsilon>0$.
\vspace{0.1in}

(i) Then there exists a finite rank projection $P$ such that $\|(I-P)f_l\|< \epsilon$ for $1\leq l\leq L$.
\vspace{0.1in}

(ii) Furthermore, $(I-P)AP \in \mathcal{B}_2(\mathcal{H}) ,~ \|(I-P)AP\|_2 < \epsilon$  and  $\|(I-P)e^{itA}P\|_2<\epsilon$  uniformly for $t$ with $|t|\leq T$.
\end{ppsn}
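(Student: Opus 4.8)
The plan is to build $P$ as the orthogonal projection onto a finite-dimensional subspace manufactured from the spectral resolution $E(\cdot)$ of $A$, chosen so that it simultaneously captures the $f_l$ up to $\epsilon$ and almost commutes with $A$ in Hilbert--Schmidt norm. First I would invoke the strong convergence $E([-N,N])\to I$ to fix $N>0$ with $\|(I-E([-N,N]))f_l\|<\epsilon$ for all $1\le l\le L$; on $\mathrm{range}(E([-N,N]))$ the operator $A$ is bounded by $N$ (so this range lies in $\mathrm{Dom}(A)$), which removes all domain questions at once. Next I would partition $[-N,N]$ into $M$ consecutive subintervals $\Delta_1,\dots,\Delta_M$ of length $\le\delta$ with centres $c_j$, where $\delta$ is a small parameter fixed only at the end and $M\le 2N/\delta+1$, and set $g_{j,l}=E(\Delta_j)f_l$, $\mathcal{K}=\mathrm{span}\{g_{j,l}:1\le j\le M,\ 1\le l\le L\}$, with $P$ the orthogonal projection onto $\mathcal{K}$. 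Then $P$ is finite rank with $\mathrm{rank}(P)=\dim\mathcal{K}\le ML$.

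Part (i) is immediate: since $E([-N,N])f_l=\sum_{j=1}^M g_{j,l}\in\mathcal{K}=\mathrm{range}(P)$, one has $\|(I-P)f_l\|\le\|(I-E([-N,N]))f_l\|<\epsilon$. For (ii) the decisive structural fact is that $E(\Delta_j)E(\Delta_{j'})=0$ for $j\ne j'$, so $\mathcal{K}=\bigoplus_{j=1}^M\mathcal{K}_j$ with $\mathcal{K}_j=\mathrm{span}\{g_{j,l}:l\}\subseteq\mathrm{range}(E(\Delta_j))$ and $\dim\mathcal{K}_j\le L$. Choosing an orthonormal basis $\{e_{j,m}\}$ of $\mathcal{K}$ adapted to this decomposition, I would write, for $e_{j,m}\in\mathcal{K}_j$, $Ae_{j,m}=c_j e_{j,m}+(A-c_j)E(\Delta_j)e_{j,m}$; the scalar term lies in $\mathrm{range}(P)$ and is killed by $I-P$, so $\|(I-P)Ae_{j,m}\|=\|(I-P)(A-c_j)E(\Delta_j)e_{j,m}\|\le\|(A-c_j)E(\Delta_j)\|\le\delta/2$. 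Summing squares over the orthonormal basis gives
\[
\|(I-P)AP\|_2^2=\sum_{j,m}\|(I-P)Ae_{j,m}\|^2\le\left(\tfrac{\delta}{2}\right)^2\dim\mathcal{K}\le\left(\tfrac{\delta}{2}\right)^2 ML\le\tfrac{1}{2}NL\,\delta,
\]
which tends to $0$ as $\delta\to0$.

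The same computation dispatches the exponential, using $\|(e^{itA}-e^{itc_j})E(\Delta_j)\|=\sup_{\lambda\in\Delta_j}|e^{it\lambda}-e^{itc_j}|\le|t|\,\delta/2\le T\delta/2$ for $|t|\le T$, so that $\|(I-P)e^{itA}P\|_2^2\le(T\delta/2)^2 ML\le\tfrac{1}{2}T^2NL\,\delta$ uniformly in $|t|\le T$. Hence fixing $\delta$ small enough forces both Hilbert--Schmidt norms below $\epsilon$, and $(I-P)AP\in\mathcal{B}_2(\mathcal{H})$ simply because it is of finite rank. The one point to watch --- and the reason a crude argument fails --- is the tension between the two competing parameters: the per-interval error is $O(\delta)$ while the number of basis vectors grows like $M\sim1/\delta$, so a naive triangle-inequality bound $\|(I-P)AP\|_2\le\sum_j\|(I-P)AP_j\|_2$ would only yield an $O(1)$ estimate. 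The saving comes precisely from evaluating the Hilbert--Schmidt norm as a single sum of squares over an orthonormal basis, where the $M\sim1/\delta$ terms each contribute $O(\delta^2)$, producing the net factor $\delta$ that drives the estimate to zero.
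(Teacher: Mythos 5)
Your proof is correct, and the construction of $P$ is the same as the paper's (truncate the spectrum to a finite interval capturing the $f_l$ up to $\epsilon$, slice it into small spectral subintervals, and project onto the span of the sliced vectors $E(\Delta_j)f_l$); part (i) is handled identically. Where you genuinely diverge is in the estimates of part (ii). For $\|(I-P)AP\|_2$ the paper first bounds the operator norm $\|(I-P)AP\|$ and then multiplies by $\sqrt{\mathrm{rank}\,P}$, whereas you evaluate the Hilbert--Schmidt norm directly as a sum of squares over an orthonormal basis adapted to the orthogonal decomposition $\mathcal{K}=\bigoplus_j\mathcal{K}_j$; both give an $O(1/\sqrt{n})$ (equivalently $O(\sqrt{\delta})$) bound, yours with the marginally better constant $a\sqrt{L}$ in place of $aL$, and your remark about why a naive triangle-inequality summation loses the decay is exactly the point of the paper's $\sqrt{\dim P}$ factor as well. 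The more substantive difference is the exponential: the paper derives a Gronwall-type integral inequality for $\alpha(t)=\|(I-P)e^{itA}P\|_2$ and solves it, while you simply use the functional-calculus bound $\|(e^{itA}-e^{itc_j})E(\Delta_j)\|\le |t|\,\delta/2$ and repeat the sum-of-squares computation; this is shorter, avoids the exponential constant $e^{a\sqrt{L}T}$, and gives the uniformity in $|t|\le T$ for free. Two cosmetic points: your final bounds $\left(\tfrac{\delta}{2}\right)^2 ML\le\tfrac12 NL\delta$ quietly drop the $+1$ in $M\le 2N/\delta+1$ (harmless, since the extra term is $O(\delta^2)$ and the conclusion only needs the bound to vanish as $\delta\to0$), and the equality $\|(e^{itA}-e^{itc_j})E(\Delta_j)\|=\sup_{\lambda\in\Delta_j}|e^{it\lambda}-e^{itc_j}|$ should strictly be $\le$, since the supremum is over the spectrum of $A$ inside $\Delta_j$.
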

\begin{proof}
 Let $F(.)$ be the spectral measure associated with the self-adjoint operator $A$, and choose $a_l > 0$ such that 
\begin{equation*}
 \|\left[I-F\left((-a_l,a_l]\right)\right]f_l\|<\epsilon \quad \text{for} \quad 1\leq l\leq L.
\end{equation*}
If we set $a=\max\{a_l:~~~1\leq l\leq L\}$, then 
\begin{equation*}
 \|\left[I-F\left((-a,a]\right)\right]f_l\|\leq \|\left[I-F\left((-a_l,a_l]\right)\right]f_l\|< \epsilon \quad \text{for} \quad 1\leq l\leq L. 
\end{equation*}
For each positive integer $n$ and $1\leq k\leq n$, ~set $F_k=F\left(\left(\frac{2k-2-n}{n}a,\frac{2k-n}{n}a\right]\right)$ ~so that
\begin{equation*}
 F_kF_j=\delta_{kj}F_j \quad \text{and} \quad \sum_{k=1}^n F_k = F\left((-a,a]\right).
\end{equation*}
We also set for $1\leq k\leq n$ and $1\leq l\leq L$,
\begin{equation*}\label{eq: I est}
g_{kl} =
\begin{cases}
\frac{F_kf_l}{\|F_kf_l\|}, &\text{if $F_kf_l\neq 0.$}\\
 0, ~&\text{if $ F_kf_l=0$.}
\end{cases}
\end{equation*}
Let $P$ be the orthogonal projection onto the subspace generated by $\{g_{kl}: 1\leq k\leq n ; 1\leq l\leq L\}$ ; dim$P\mathcal{H} \leq nL$. 
Clearly $g_{kl}\in Dom(A)$ for all $k,l$ and hence $P\mathcal{H} \subseteq Dom(A)$. Moreover, $Ag_{kl} , PAg_{kl} \in F_k\mathcal{H}$ for each $k$ and $l$.
A simple calculation as in page 831 of (\cite{Sinhamohapatra}), shows that for $\lambda_k = \frac{2k-n-1}{n}a$,
\begin{equation*}
\|\left(A-\lambda_k\right)g_{kl}\|^2 \leq \left(\frac{a}{n}\right)^2 \quad \text{for} \quad 1\leq l\leq L, \quad \text{and therefore} \quad
\end{equation*}
\begin{equation*}
\|(I-P)APu\|^2 \leq ~\left(\frac{a}{n}\right)^2\sum_k\left(\sum_l|\langle u,g_{k,l}\rangle|\right)^2 \leq ~\frac{a^2}{n^2}~L~\|u\|^2 \quad \text{for} \quad u\in \mathcal{H}.
\end{equation*}
The operators $PA(I-P)$ and $(I-P)AP$ are finite rank operators with rank less than or equal to $nL$. Hence, using the above estimate we get that 
\begin{equation*}
 \|(I-P)AP\|_2 = \|PA(I-P)\|_2\leq \sqrt{dim(P)}~\|(I-P)AP\|\leq \sqrt{nL}~\left(\frac{a}{n}\right)~\sqrt{L}= L~\left(\frac{a}{\sqrt{n}}\right).
\end{equation*}
Thus again by the same calculation as in page 831 of (\cite{Sinhamohapatra}), it follows that 
\begin{equation}\label{eq: gronineq}
\alpha(t) \equiv \|(I-P)e^{itA}P\|_2 = \|(I-P)\left(e^{itA}-I\right)P\|_2 \leq a\sqrt{L}\int_0^t\alpha(s)ds + T~L~\frac{a}{\sqrt{n}} \quad \text{for} \quad |t|\leq T,
\end{equation}
solving this Gronwall-type inequality \eqref{eq: gronineq} leads to
\begin{equation*}
\alpha(t) ~\leq \frac{\left(T~L~a~e^{a\sqrt{L}t}\right)}{\sqrt{n}}~\leq \frac{\left(T~L~a~e^{a\sqrt{L}T}\right)}{\sqrt{n}}.
\end{equation*}
Since $(I-P)F\left((-a,a]\right)f_l = 0$ for $1\leq l\leq L$,
\begin{equation*}
\|(I-P)f_l\|  = \|(I-P)\left[I-F\left((-a,a]\right)\right]f_l\| ~\leq \|\left[I-F\left((-a,a]\right)\right]f_l\| < \epsilon \quad \text{for} \quad 1\leq l\leq L. 
\end{equation*}
The proof concludes by choosing $n$ sufficiently large.
~~~~~~~~~~~~~~~~~~~~~~~~~~~~~~~~~~~~~~~~~~~~~~~~\end{proof}

\begin{lmma}\label{lmma: finiteapp}
Let $H$ and $H_0$ be two self-adjoint operators in a separable infinite dimensional Hilbert space $\mathcal{H}$ such that  $H-H_0 \equiv V \in \mathcal{B}_2(\mathcal{H})$. Then
given $\epsilon>0$, there exists a projection P of finite rank such that for all $t$ with $|t|\leq T$,
\vspace{0.1in}

(i) $\|(I-P)H_0P\|_2 < \epsilon$, \hspace{1cm} $ \|(I-P)e^{itH_0}P\|_2 < \epsilon$,
\vspace{0.1in}

(ii) $\|(I-P)V\|_2 < 2\epsilon$, \hspace{1cm} $\|(I-P)HP\|_2 < 3\epsilon.$
\end{lmma}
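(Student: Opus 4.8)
The plan is to produce a single finite-rank projection $P$ that simultaneously controls $H_0$ (through Proposition \ref{prop: weylabs}) and the perturbation $V$. The guiding observation is that $V\in\mathcal{B}_2(\mathcal{H})$ is compact, hence well approximated in $\|.\|_2$ by a finite-rank operator; so making $(I-P)V$ small will reduce to making $(I-P)$ small on finitely many fixed vectors, which is exactly what Proposition \ref{prop: weylabs} guarantees. Since $V=H-H_0$ is bounded, self-adjoint and Hilbert-Schmidt, it is compact self-adjoint and admits a spectral decomposition $V=\sum_n \mu_n\langle .\,,e_n\rangle e_n$ with $\{e_n\}$ orthonormal, $\mu_n\in\mathbb{R}$, and $\|V\|_2^2=\sum_n\mu_n^2<\infty$.

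First I would fix $N$ so large that the truncation $V_N=\sum_{n=1}^N \mu_n\langle .\,,e_n\rangle e_n$ obeys $\|V-V_N\|_2<\epsilon$. Next, setting $\epsilon'=\epsilon/(1+\|V\|_2)$, I apply Proposition \ref{prop: weylabs} to the self-adjoint operator $A=H_0$, the finite family of normalized vectors $\{e_1,\dots,e_N\}$, and the parameter $\epsilon'$. This yields a finite-rank projection $P$ with $\|(I-P)e_n\|<\epsilon'$ for $1\le n\le N$, together with $\|(I-P)H_0P\|_2<\epsilon'$ and $\|(I-P)e^{itH_0}P\|_2<\epsilon'$ uniformly for $|t|\le T$. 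Because $\epsilon'\le\epsilon$, part (i) is then immediate.

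The key estimate for the perturbation is the Hilbert-Schmidt identity
\[
\|(I-P)V_N\|_2^2=\sum_{n=1}^N\mu_n^2\,\|(I-P)e_n\|^2\le(\epsilon')^2\sum_{n=1}^N\mu_n^2\le(\epsilon')^2\|V\|_2^2,
\]
obtained by writing $(I-P)V_N=\sum_{n=1}^N\mu_n\langle .\,,e_n\rangle(I-P)e_n$ and taking the trace of its square. Hence $\|(I-P)V_N\|_2\le\epsilon'\|V\|_2<\epsilon$, and the triangle inequality together with $\|(I-P)(V-V_N)\|_2\le\|V-V_N\|_2<\epsilon$ gives
\[
\|(I-P)V\|_2\le\|(I-P)(V-V_N)\|_2+\|(I-P)V_N\|_2<\epsilon+\epsilon=2\epsilon.
\]
Finally, decomposing $(I-P)HP=(I-P)H_0P+(I-P)VP$ and using $\|(I-P)VP\|_2\le\|(I-P)V\|_2$ with the bounds just established yields $\|(I-P)HP\|_2<\epsilon+2\epsilon=3\epsilon$, which completes (ii).

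The only genuine obstacle is forcing one projection to serve both operators at once. This is resolved by the simple device of feeding the range (equivalently, eigen-) vectors of a finite-rank $\|.\|_2$-approximant of $V$ as the input vectors $\{f_l\}$ in Proposition \ref{prop: weylabs}: the projection built from the spectral data of $H_0$ then automatically nearly annihilates $V$ as well. Everything beyond this point is the compactness of $V$ and routine triangle-inequality bookkeeping, and I would only need to track the constants carefully enough to keep all inequalities strict.
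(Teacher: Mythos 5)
Your proof is correct and follows essentially the same route as the paper: truncate the spectral (canonical) decomposition of the Hilbert--Schmidt perturbation $V$, feed its finitely many eigenvectors into Proposition \ref{prop: weylabs} with a suitably shrunk $\epsilon'$, and finish with the triangle inequality. The only cosmetic difference is that you bound $\|(I-P)V_N\|_2$ via the orthogonality identity $\|(I-P)V_N\|_2^2=\sum_n\mu_n^2\|(I-P)e_n\|^2$ with $\epsilon'=\epsilon/(1+\|V\|_2)$, whereas the paper uses the cruder estimate $\|(I-P)V_L\|_2\le\epsilon'\sum_l|\tau_l|$ with $\epsilon'=\min\{\epsilon,\epsilon/\sum_l|\tau_l|\}$; both yield the stated constants.
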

\begin{proof}
Let $V=\suml_{l=1}^{\infty}\tau_l \lvert f_l \rangle \langle f_l \rvert$ be the canonical form of $V$ with $\suml_{l=1}^{\infty}\tau_l^2< \infty$ 
and choose $L$ in $V_L \equiv \suml_{l=1}^{L} \tau_l\lvert f_l \rangle \langle f_l \rvert$ so that $\|V-V_L\|_2 = \sqrt{\suml_{l=L+1}^{\infty}\tau_l^2}< \epsilon$ and 
$\epsilon ^{'} =\min \{\epsilon, \frac{\epsilon}{\suml_{l=1}^L|\tau_l|} \}> 0$. Next, we apply Proposition \ref{prop: weylabs} with  $A=H_0,~\{f_1,f_2,\ldots, f_L\}$ 
and $\epsilon^{'}$ in place of $\epsilon$. Hence we get a projection $P$ of finite rank in $\mathcal{H}$ such that 
\begin{equation*}
 \|(I-P)H_0P\|_2< \epsilon^{'}< \epsilon \quad \text{and} \quad \|(I-P)e^{itH_0}P\|_2< \epsilon^{'}< \epsilon, 
\end{equation*}
 uniformly for $t$ with $|t|\leq T$. For $\emph{(ii)}$ we note that  
\begin{equation*}
\|(I-P)V\|_2 \leq \|V-V_L\|_2 + \|(I-P)V_L\|_2  < \epsilon + \epsilon^{'}\left(\sum_{l=1}^L |\tau_l|\right) < 2\epsilon \quad \text{and therefore} \quad
\end{equation*}
\begin{equation*}
 \|(I-P)HP\|_2 \leq \|(I-P)H_0P\|_2 + \|(I-P)VP\|_2 < 3\epsilon.
\end{equation*}
~~~~~~~~~~~~~~~~~~~~~~~~~~~~~~~~~~~~~~~~~~~~~~~~~~~~~~~~~~~~~~~~~~~~~~~~~~~~~~~~~~~~~~~~~~~~~~~~~~~~~~~~~~~~~~~~~~~~~~~~~~~~~~~~~~~~~~~~~~~\end{proof}
\begin{rmrk}\label{rmk: sequence}
We can reformulate the statement of  Lemma \ref{lmma: finiteapp}  by saying that there exists a sequence $\{P_n\}$ of finite rank projections 
in $\mathcal{H}$ such that 
$$\|(I-P_n)H_0P_n\|_2,~ \|(I-P_n)e^{itH_0}P_n\|_2,~ \|(I-P_n)V\|_2 ,~ \|(I-P_n)HP_n\|_2 \longrightarrow 0 \quad \text{as} \quad n\longrightarrow \infty.$$
It may also be noted that $\{P_n\}$ does not necessarily converge strongly to $I$.
\end{rmrk}
\vspace{0.2in}

The next two theorems show how  Lemma \ref{lmma: finiteapp} can be used to reduce the relevant  problem into a finite dimensional one, in the cases when the self-adjoint pair $(H_0 , H)$ are bounded and unbounded.
\begin{thm}\label{thm: bdpolyappro}
Let $H$ and $H_0$ be two bounded self-adjoint operators in a separable infinite dimensional Hilbert space $\mathcal{H}$ such that $H-H_0 \equiv V \in \mathcal{B}_2(\mathcal{H})$. Then 
there exists a sequence $\{P_n\}$ of finite rank projections in $\mathcal{H}$ such that
\vspace{0.1in}

$Tr\{p(H)-p(H_0)-Dp(H_0)\bullet V\}$
\begin{equation}\label{eq: appropoly}
=\lim_{n\rightarrow \infty} Tr\{P_n\left[p(P_nHP_n)-p(P_nH_0P_n)-Dp(P_nH_0P_n)\bullet P_nVP_n\right]P_n\},
\end{equation}
where $p(.)$ is a polynomial.
\end{thm}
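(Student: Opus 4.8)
The plan is to reduce the statement to a single monomial, to expand both Koplienko remainders into their explicit second-order forms using Theorem~\ref{thm: finitekop}(i), and then to compare the infinite- and finite-dimensional traces term by term. The whole argument rests on two features: both remainders are \emph{second order} in $V$, hence built from products carrying two Hilbert--Schmidt factors and therefore trace class; and the only genuine subtlety, namely that $P_n$ need \emph{not} converge strongly to $I$ (Remark~\ref{rmk: sequence}), is neutralised by routing everything through the Hilbert--Schmidt-small off-diagonal corners $Q_nHP_n$, $Q_nH_0P_n$, $Q_nV$, where $Q_n=I-P_n$.

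First I would fix the sequence $\{P_n\}$ from Remark~\ref{rmk: sequence} and write $H_n=P_nHP_n$, $H_{0,n}=P_nH_0P_n$, $V_n=P_nVP_n=H_n-H_{0,n}$. Since $p\mapsto p(H)-p(H_0)-Dp(H_0)\bullet V$ and its finite-dimensional analogue are both linear in $p$, and since the trace is linear, it suffices to treat $p(\lambda)=\lambda^r$; the cases $r=0,1$ give $0$ on both sides, so assume $r\geq 2$. Applying Theorem~\ref{thm: finitekop}(i) (equivalently the expansion \eqref{eq: totalexpress}) first in $\mathcal{H}$ and then in the finite-dimensional space $P_n\mathcal{H}$ turns both remainders into explicit sums,
\begin{equation*}
p(H)-p(H_0)-Dp(H_0)\bullet V=\sum_{j=0}^{r-2}\sum_{k=0}^{r-j-2}H^{r-j-k-2}VH_0^kVH_0^j,
\end{equation*}
and the same identity with $(H,H_0,V)$ replaced by $(H_n,H_{0,n},V_n)$. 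As the latter operators live on $P_n\mathcal{H}$, the outer projections in \eqref{eq: appropoly} are redundant, so the theorem follows once I prove, for each fixed triple with $m=r-j-k-2\geq 0$,
\begin{equation*}
Tr\{H^mVH_0^kVH_0^j\}=\lim_{n\to\infty}Tr\{H_n^mV_nH_{0,n}^kV_nH_{0,n}^j\}.
\end{equation*}

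Next I would record two quantitative ingredients. From $\|Q_nV\|_2\to 0$ and $V=V^*$ one gets $\|V-V_n\|_2\leq 3\|Q_nV\|_2\to 0$ together with the uniform bound $\|V_n\|_2\leq\|V\|_2$. The second ingredient is a block estimate: for any word $W=A_1\cdots A_p$ with each $A_i\in\{H,H_0\}$,
\begin{equation*}
\|P_nWP_n-(P_nA_1P_n)\cdots(P_nA_pP_n)\|\longrightarrow 0,
\end{equation*}
proved by inserting $I=P_n+Q_n$ in each of the $p-1$ gaps. The all-$P_n$ term is exactly the compressed product, while any other term contains, at its leftmost $P_n\to Q_n$ transition, a factor $P_nA_{i}Q_n$ of norm $\|Q_nA_{i}P_n\|\leq\max(\|Q_nHP_n\|,\|Q_nH_0P_n\|)\to 0$, the remaining factors being bounded by $\max(\|H\|,\|H_0\|)^{\,p-1}$. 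In particular $\|P_nH_0^kP_n-H_{0,n}^k\|\to 0$ and $\|P_nH_0^jH^mP_n-H_{0,n}^jH_n^m\|\to 0$.

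Finally I would combine these. Replacing the two factors $V$ by $V_n$ changes $Tr\{H^mVH_0^kVH_0^j\}$ by terms of the form $Tr\{H^m(V-V_n)H_0^kVH_0^j\}$, each bounded by $\|H\|^m\|H_0\|^{k+j}\|V-V_n\|_2\|V\|_2\to 0$ because the product of the two Hilbert--Schmidt factors is trace class. After this replacement, cyclicity of the trace together with $V_n=P_nV_nP_n$ yields the exact identities
\begin{equation*}
Tr\{H^mV_nH_0^kV_nH_0^j\}=Tr\{V_n(P_nH_0^kP_n)V_n(P_nH_0^jH^mP_n)\},
\end{equation*}
and $Tr\{H_n^mV_nH_{0,n}^kV_nH_{0,n}^j\}=Tr\{V_nH_{0,n}^kV_n(H_{0,n}^jH_n^m)\}$; subtracting and inserting the block estimates, each error being of the form $Tr\{V_nXV_nY\}$ with one bounded factor small in norm and hence at most $\|V_n\|_2^2$ times that norm, shows the two traces differ by $o(1)$. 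The main obstacle is precisely this last comparison: since $P_n\not\to I$ strongly, I cannot pass to the limit factor by factor, and the argument must extract every discrepancy through the Hilbert--Schmidt-small corners while exploiting that the Koplienko remainder carries two $V$-factors, which makes each product trace class and lets the bounded operators be absorbed harmlessly.
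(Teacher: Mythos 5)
Your proposal is correct and follows essentially the same route as the paper: reduce to $p(\lambda)=\lambda^r$, use the second-order expansion \eqref{eq: totalexpress} to reduce the claim to the terms $H^{r-j-k-2}VH_0^{k}VH_0^{j}$, and compare with the compressed version by routing every discrepancy through the Hilbert--Schmidt-small corners $P_n^{\bot}HP_n$, $P_n^{\bot}H_0P_n$, $P_n^{\bot}V$ supplied by Lemma \ref{lmma: finiteapp}, exploiting that the two $V$-factors make each product trace class. The only difference is bookkeeping: the paper writes out a six-term telescoping identity in one display, whereas you first replace $V$ by $V_n$ and then invoke cyclicity of the trace together with a block estimate on words in $H,H_0$.
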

\begin{proof}
It will be sufficient to prove the theorem for $p(\lambda)=\lambda^r$. Note that for $r=0$ or $1$, both sides of \eqref{eq: appropoly} are identically 
zero. Using the sequence $\{P_n\}$ of finite rank projections as obtained in Lemma \ref{lmma: finiteapp} and using an expression similar to \eqref{eq: totalexpress}
in $\mathcal{B}(\mathcal{H})$, we have that
\begin{equation*}
\hspace{-10cm} Tr\{\left[p(H)-p(H_0)-Dp(H_0)\bullet V\right] 
\end{equation*}
\hspace{4cm} $- P_n\left[p(P_nHP_n)-p(P_nH_0P_n)-Dp(P_nH_0P_n)\bullet P_nVP_n\right]P_n\}$
\begin{equation*}
\hspace{0.2cm} = Tr\{\left[H^r-H_0^r-D(H_0^r)\bullet V\right] - P_n\left[(P_nHP_n)^r-(P_nH_0P_n)^r-D((P_nH_0P_n)^r)\bullet P_nVP_n\right]P_n\}
\end{equation*}
\begin{equation*}
\hspace{-9.9cm} = \sum_{j=0}^{r-2}\sum_{k=0}^{r-j-2}Tr\{ H^{r-j-k-2}VH_0^{k}VH_0^j 
\end{equation*}
\begin{equation*}
\hspace{4cm} - P_n(P_nHP_n)^{r-j-k-2}(P_nVP_n)(P_nH_0P_n)^k(P_nVP_n)(P_nH_0P_n)^jP_n\}
\end{equation*}
\begin{equation*}
\hspace{-4.7cm} = \sum_{j=0}^{r-2}\sum_{k=0}^{r-j-2} Tr\{ \left[H^{r-j-k-2} P_n - (P_nHP_n)^{r-j-k-2}\right]P_nVH_0^kVH_0^j
\end{equation*}
\begin{equation*}
\hspace{2cm} +~ H^{r-j-k-2} P^{\bot}_nVH_0^kVH_0^j + (P_nHP_n)^{r-j-k-2}P_nVP_n^{\bot}H_0^kVH_0^j
\end{equation*}

\begin{equation*}
\hspace{1cm} + ~(P_nHP_n)^{r-j-k-2}(P_nVP_n)\left[P_nH_0^k-(P_nH_0P_n)^k\right]VH_0^j
\end{equation*}

\begin{equation*}
\hspace{0.1cm} +~ (P_nHP_n)^{r-j-k-2}(P_nVP_n)(P_nH_0P_n)^kP_nVP_n^{\bot}H_0^j
\end{equation*}

\begin{equation}\label{eq: polysumexp}
\hspace{2cm} +~ (P_nHP_n)^{r-j-k-2}(P_nVP_n)(P_nH_0P_n)^k(P_nVP_n)\left[P_nH_0^j-(P_nH_0P_n)^j\right]\}.
\end{equation}
Using the results of Lemma \ref{lmma: finiteapp}, the first term of the expression \eqref{eq: polysumexp} leads to 
\begin{equation*}
\begin{split}
& \left\|\left[H^{r-j-k-2} - (P_nHP_n)^{r-j-k-2}\right]P_n\right\|_2 = \left\|\sum_{l=0}^{r-j-k-3} H^{r-j-k-l-3}(P_n^{\bot}HP_n)(P_nHP_n)^l\right\|_2\\
& \hspace{1cm} \leq (r-j-k-2)\|H\|^{r-j-k-3} \left\|P_n^{\bot}HP_n\right\|_2 \leq r (1+\|H\|)^r \left\|P_n^{\bot}HP_n\right\|_2 ,
\end{split}
\end{equation*}
which converges to 0 as $n\longrightarrow \infty$. For the fourth term in \eqref{eq: polysumexp}, we note that as in the calculations above, 
$$ \left\|P_n\left[H_0^k - (P_nH_0P_n)^k\right]\right\|_2 \leq k (1+\|H_0\|)^k \left\|P_n^{\bot}H_0P_n\right\|_2 \longrightarrow 0 \quad \text{as} \quad n\longrightarrow \infty, $$ and the sixth term is very similar to
the fourth term. The second, third and fifth terms in \eqref{eq: polysumexp} converges to zero in trace-norm since by Lemma \ref{lmma: finiteapp}, 
$\left\|P_n^{\bot}V\right\|_2 \longrightarrow 0 \quad \text{as} \quad n\longrightarrow \infty$.

~~~~~~~~~~~~~~~~~~~~~~~~~~~~~~~~~~~~~~~~~~~~~~~~~~~~~~~~~~~~~~~~~~~~~~~~~~~~~~~~~~~~~~~~~~~~~~~~~~~~~~~~~~~~~~~~~~~~~~~~~~~~~~~~~~~~\end{proof}

\begin{thm}\label{thm: expapps}
Let $H$ and $H_0$ be two self-adjoint operators (not necessarily bounded) in a separable infinite dimensional Hilbert space $\mathcal{H}$ such that $H-H_0 \equiv V \in \mathcal{B}_2(\mathcal{H})$. Then 
there exists a sequence $\{P_n\}$ of finite rank projections in $\mathcal{H}$ such that for any $T>0$
\vspace{0.1in}

$Tr\{e^{itH} - e^{itH_0} - D(e^{itH_0})\bullet V\}$
\begin{equation*}\label{eq: approexpo}
=\lim_{n\rightarrow \infty} Tr\{P_n\left[e^{itP_nHP_n} - e^{itP_nH_0P_n} - D(e^{itP_nH_0P_n})\bullet P_nVP_n\right]P_n\},
\end{equation*}
uniformly for all t with $|t|\leq T$.
\end{thm}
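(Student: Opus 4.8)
The plan is to reduce the claim to the convergence of a double Duhamel integrand, exactly paralleling the role that the finite expansion \eqref{eq: totalexpress} plays in the proof of Theorem \ref{thm: bdpolyappro}, but now with the bounded unitary groups $e^{itH},e^{itH_0}$ in place of powers of $H,H_0$ (this substitution is precisely what makes the unbounded case tractable). Iterating the Duhamel identity $e^{itH}-e^{itH_0}=it\int_0^1 e^{it\sigma H}Ve^{it(1-\sigma)H_0}\,d\sigma$ once more and recognising the first-order term as $D(e^{itH_0})\bullet V$ (the general form of Theorem \ref{thm: finitekop}(iii)), I get
\[
e^{itH}-e^{itH_0}-D(e^{itH_0})\bullet V=(it)^2\int_0^1\!\!\int_0^1 \sigma\, e^{it\sigma\tau H}Ve^{it\sigma(1-\tau)H_0}Ve^{it(1-\sigma)H_0}\,d\tau\,d\sigma,
\]
and the identical computation carried out inside $P_n\mathcal{H}$ (with $\widetilde H=P_nHP_n$, $\widetilde H_0=P_nH_0P_n$, $\widetilde V=P_nVP_n$) represents the bracket on the right-hand side of the theorem. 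Writing $\Phi(\sigma,\tau)$ and $\Phi_n(\sigma,\tau)$ for the two integrands, each is trace class with $\|\Phi\|_1,\|\Phi_n\|_1\le\|V\|_2^2$ because it carries two Hilbert--Schmidt factors between unitaries; hence $Tr$ passes through the integral, and since the integrand is dominated by $\sigma\|V\|_2^2$, it suffices to prove that $Tr\,\Phi_n(\sigma,\tau)\to Tr\,\Phi(\sigma,\tau)$ uniformly in $\sigma,\tau\in[0,1]$ and $|t|\le T$.

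The engine of the argument is two operator estimates for the sequence $\{P_n\}$ of Lemma \ref{lmma: finiteapp} and Remark \ref{rmk: sequence}. First, I will show $\|(I-P_n)e^{itH}P_n\|_2\to0$ uniformly for $|t|\le T$; the statement for $H_0$ is in Lemma \ref{lmma: finiteapp}, and the one for $H$ follows from the first Duhamel formula after inserting $P_n+(I-P_n)$ around $V$ and using $\|(I-P_n)V\|_2\to0$ together with the $H_0$-bound. Second, I will prove the ``compression versus exponential-of-compression'' estimate
\[
\big\|P_n e^{it\beta H}P_n-P_n e^{it\beta P_nHP_n}P_n\big\|_2\to0 \qquad (\beta\in[0,1],\ |t|\le T),
\]
and likewise with $H_0$. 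This I obtain by an interaction-picture computation: using $P_n\mathcal{H}\subseteq Dom(H)=Dom(H_0)$, the difference $Y(t)=P_ne^{itH}P_n-P_ne^{itP_nHP_n}P_n$ satisfies $Y'=iP_nHP_n\,Y+i(P_nH(I-P_n))\big((I-P_n)e^{itH}P_n\big)$ with $Y(0)=0$, so that $\|Y(t)\|_2\le|t|\sup_{|s|\le T}\|(I-P_n)HP_n\|\,\|(I-P_n)e^{isH}P_n\|_2\to0$ by the first estimate and $\|(I-P_n)HP_n\|_2\to0$.

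With these in hand I compare the traces. Since $\|(I-P_n)V\|_2,\|V(I-P_n)\|_2\to0$, I first replace each $V$ in $\Phi$ by $\widetilde V=P_nVP_n$; the error is a sum of terms each carrying a factor of Hilbert--Schmidt norm $o(1)$ beside another Hilbert--Schmidt factor, hence $o(1)$ in trace norm, giving $Tr\,\Phi=Tr\{A\widetilde V B\widetilde V C\}+o(1)$ uniformly, where $A=e^{it\sigma\tau H}$, $B=e^{it\sigma(1-\tau)H_0}$, $C=e^{it(1-\sigma)H_0}$. Exploiting $\widetilde V=P_n\widetilde VP_n$ and the cyclicity of the (finite) trace, both $Tr\{A\widetilde V B\widetilde V C\}$ and $Tr\,\Phi_n$ take the common shape $Tr\{X\widetilde V Y\widetilde V\}$ with $X,Y$ supported on $P_n\mathcal{H}$: for $\Phi$ one finds $X=P_nCAP_n$, $Y=P_nBP_n$, and for $\Phi_n$ one finds $X=\widehat C\widehat A$, $Y=\widehat B$, the hats denoting the compressed exponentials. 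A two-step telescoping then bounds the difference by $\big(\|P_nCAP_n-\widehat C\widehat A\|+\|P_nBP_n-\widehat B\|\big)\|V\|_2^2$; the second operator norm is the second estimate directly, and the first follows by inserting $P_n+(I-P_n)$ between $C$ and $A$ (the off-diagonal piece is $o(1)$ by the first estimate) and then applying the second estimate to each compressed factor via $\|XY-\widehat X\widehat Y\|\le\|X-\widehat X\|+\|Y-\widehat Y\|$ for contractions. This yields $Tr\,\Phi_n\to Tr\,\Phi$ uniformly and closes the proof.

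The main obstacle is the second estimate: for unbounded $H,H_0$ the term-by-term power telescoping of Theorem \ref{thm: bdpolyappro} is simply unavailable, and one must instead control the gap between \emph{compressing} the unitary group and \emph{exponentiating} the compression $P_nHP_n$, which is exactly where the domain hypothesis $P_n\mathcal{H}\subseteq Dom(H)$ and the Gronwall-type interaction-picture bound are needed. A second, easily missed point is that $\|e^{itH}-P_ne^{itP_nHP_n}P_n\|$ does \emph{not} go to $0$ in operator norm, so the five-factor product cannot be telescoped factor by factor; it is the cyclicity of the trace together with the fact that $\widetilde V$ is supported on $P_n\mathcal{H}$ that converts every ``bare'' unitary difference into a compressed difference governed by the second estimate.
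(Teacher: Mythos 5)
Your proposal is correct, and its skeleton --- the second-order Duhamel expansion \eqref{eq: expfrechex} of both the full and the compressed expressions, followed by a term-by-term comparison of the two five-factor integrands using $\|(I-P_n)V\|_2\to 0$ --- is exactly the paper's. Where you genuinely diverge is in how the key unitary-comparison estimate is obtained and in the bookkeeping. The paper gets the single bound $\bigl\|[e^{itX}-e^{itP_nXP_n}]P_n\bigr\|_2\le T\,\|(I-P_n)XP_n\|_2$ for $X=H$ or $H_0$ in one line, from the Duhamel identity
$[e^{itX}-e^{itP_nXP_n}]P_n = it\int_0^1 e^{it\gamma X}\,(I-P_n)XP_n\,e^{it(1-\gamma)P_nXP_n}P_n\,d\gamma$,
which is legitimate because $e^{it(1-\gamma)P_nXP_n}P_n$ stays inside $P_n\mathcal{H}\subseteq Dom(X)$, where $X-P_nXP_n$ acts as $(I-P_n)XP_n$. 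This one identity subsumes both of your estimates: multiplying on the left by $I-P_n$ gives your first (since $(I-P_n)e^{itP_nXP_n}P_n=0$), and by $P_n$ gives your second. Your two-stage route is valid but longer, and note that your first estimate is not the one-step deduction you suggest: inserting $P_n+(I-P_n)$ around $V$ in the first Duhamel formula leaves the recursive term $(I-P_n)e^{itsH}P_n\cdot Ve^{it(1-s)H_0}P_n$, so you still have to close a Gronwall inequality of the form $\alpha_n(t)\le\delta_n+\|V\|\int_0^{|t|}\alpha_n(u)\,du$; you invoke Gronwall only for the second estimate, where in fact plain variation of constants already suffices (and there one can equally take the Hilbert--Schmidt norm on $P_nH(I-P_n)$ and the operator norm on $(I-P_n)e^{isH}P_n$, making the second estimate independent of the first). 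Finally, the paper telescopes the five-factor product directly into seven terms inside the trace, arranging matters so that every difference of exponentials carries a $P_n$ borrowed from an adjacent $P_nVP_n$; your replacement of $V$ by $P_nVP_n$ followed by cyclicity of the trace achieves the same effect and is a matter of taste. Net effect: same theorem, same strategy, a more roundabout proof of the one estimate that carries the unbounded case.
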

\begin{proof}
As in the case of a finite dimensional Hilbert space, $f(H_0) = e^{itH_0}$ is Frechet differentiable and 
$$D(e^{itH_0})\bullet V = it\int_0^1 e^{it\alpha  H_0} V e^{it(1-\alpha)  H_0} d\alpha \in \mathcal{B}_2(\mathcal{H}).$$ Therefore
\begin{equation}\label{eq: expfrechex}
\begin{split}
& e^{itH} - e^{itH_0} - D(e^{itH_0})\bullet V \\
& \hspace{2cm} = (it)^2 \int_0^1 \alpha d\alpha \int_0^1 d\beta ~e^{it\alpha \beta H}Ve^{it\alpha (1-\beta)H_0}Ve^{it(1-\alpha) H_0} \in \mathcal{B}_1(\mathcal{H})\\
\end{split}
\end{equation}
and hence by Fubini's theorem,
\begin{equation*}
\begin{split}
& Tr\{e^{itH} - e^{itH_0} - D(e^{itH_0})\bullet V\} \\
& \hspace{2cm} = (it)^2 \int_0^1 \alpha d\alpha \int_0^1 d\beta ~Tr\{e^{it\alpha \beta H}Ve^{it\alpha (1-\beta)H_0}Ve^{it(1-\alpha) H_0}\}. \\
\end{split}
\end{equation*}
Thus,
\begin{equation*}
\begin{split}
& Tr\{e^{itH} - e^{itH_0} - D(e^{itH_0})\bullet V\}\\
& \hspace{2cm} -  Tr\{P_n\left[e^{itP_nHP_n} - e^{itP_nH_0P_n} - D(e^{itP_nH_0P_n})\bullet P_nVP_n\right]P_n\}\\
& = (it)^2 \int_0^1 \alpha d\alpha \int_0^1 d\beta ~Tr\{e^{it\alpha \beta H}Ve^{it\alpha (1-\beta)H_0}Ve^{it(1-\alpha) H_0}\\
& \hspace{5cm} - P_ne^{it\alpha \beta P_nHP_n}P_nVP_ne^{it\alpha (1-\beta)P_nH_0P_n}P_nVP_ne^{it(1-\alpha) P_nH_0P_n}P_n\}\\
\end{split}
\end{equation*}
\begin{equation*}
\hspace{-2.5cm} = (it)^2 \int_0^1 \alpha d\alpha \int_0^1 d\beta ~Tr\{\left[e^{it\alpha \beta H} - e^{it\alpha \beta P_nHP_n}\right]P_nV e^{it\alpha (1-\beta)H_0}Ve^{it(1-\alpha) H_0}\\
\end{equation*}
\begin{equation*}
\hspace{3cm} +~~e^{it\alpha \beta H} P_n^{\bot}V  e^{it\alpha (1-\beta)H_0} V e^{it(1-\alpha) H_0}
\end{equation*}

\begin{equation*}
\hspace{3cm} +~P_ne^{it\alpha \beta P_nHP_n} P_nVP_n^{\bot}  e^{it\alpha (1-\beta)H_0} V e^{it(1-\alpha) H_0}
\end{equation*}

\begin{equation*}
\hspace{3cm} +~~P_ne^{it\alpha \beta P_nHP_n} P_nVP_n \left[e^{it\alpha (1-\beta)H_0} - e^{it\alpha (1-\beta)P_nH_0P_n}\right] P_nV e^{it(1-\alpha) H_0}
\end{equation*}

\begin{equation*}
\hspace{5cm} +~~P_ne^{it\alpha \beta P_nHP_n} P_nVP_n e^{it\alpha (1-\beta)H_0} P_n^{\bot}V e^{it(1-\alpha) H_0}
\end{equation*}

\begin{equation*}
\hspace{5cm} +~~P_ne^{it\alpha \beta P_nHP_n} P_nVP_n e^{it\alpha (1-\beta)P_nH_0P_n} P_nVP_n^{\bot} e^{it(1-\alpha) H_0}
\end{equation*}

\begin{equation}\label{eq: expexpress}
\hspace{3cm} +~~P_ne^{it\alpha \beta P_nHP_n} P_nVP_n e^{it\alpha (1-\beta)P_nH_0P_n} P_nVP_n \left[e^{it(1-\alpha)H_0} - e^{it(1-\alpha)P_nH_0P_n}\right]\}.
\end{equation}
In the first term of the expression \eqref{eq: expexpress} : 
\begin{equation*}
\begin{split}
& \left\| \left[e^{it\alpha \beta H} - e^{it\alpha \beta P_nHP_n}\right]P_n\right\|_2 \\
& \hspace{2cm} \leq \left\| it\alpha \beta \int_0^1 d\gamma e^{it\alpha \beta \gamma H} P_n^{\bot}HP_n e^{it\alpha \beta (1-\gamma)P_nHP_n}P_n\right\|_2 \leq ~T\left\|P_n^{\bot}HP_n\right\|_2,\\
\end{split}
\end{equation*}
which converges to $0$ as $n\rightarrow \infty$, uniformly for $|t|\leq T$ by Remark \ref{rmk: sequence} . For the fourth term in \eqref{eq: expexpress}, we note that as in the calculations above,
$$\left\| \left[e^{it\alpha (1-\beta) H_0} - e^{it\alpha (1-\beta) P_nH_0P_n}\right]P_n\right\|_2 \leq ~T \left\|P_n^{\bot}H_0P_n\right\|_2\longrightarrow 0 \quad \text{as} \quad n\longrightarrow \infty,$$ for $|t| \leq T$
and the seventh term is very similar to the fourth term. The second, third, fifth and sixth terms in \eqref{eq: expexpress} converges to zero in trace-norm
since by Lemma \ref{lmma: finiteapp}, 
$\left\|P_n^{\bot}V\right\|_2 \longrightarrow 0 \quad \text{as} \quad n\longrightarrow \infty$.
~~~~~~~~~~~~~~~~~~~~~~~~~~~~~~~~~~~~~~~~~~~~~~~~~~~~~~~~~~~~~~~~~~~~~~~~~~~~~~~~~~~~~~~~~~~~~~~\end{proof}

\section{Koplienko formula for both bounded and unbounded cases}
In this section, we derive the trace formulas for both bounded and unbounded self-adjoint pairs $(H_0, H)$.
\begin{thm}\label{thm: finalbddthm}
Let $H$ and $H_0$ be two bounded self-adjoint operators in an infinite dimensional separable Hilbert space $\mathcal{H}$ such that 
$H-H_0 \equiv V \in \mathcal{B}_2(\mathcal{H})$.  Then for any polynomial $p(.)$, $p(H) - p(H_0) - Dp(H_0)\bullet V \in \mathcal{B}_1(\mathcal{H})$ and
there exists a unique non-negative $L^1(\mathbb{R})$-function $\eta$ supported on $[a,b]$ such that 
$$Tr\{p(H)-p(H_0)-Dp(H_0)\bullet V\} = \int_a^b p''(\lambda)\eta (\lambda)d\lambda,$$ where,~$a= \inf \sigma(H_0) - \|V\|$,~
$b= \sup \sigma(H_0) + \|V\|.$ Furthermore $\itt_a^b|\eta(\lambda)|d\lambda = \frac{1}{2}\|V\|_2^2.$
\end{thm}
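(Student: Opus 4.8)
The plan is to transport the finite-dimensional formula of Theorem \ref{thm: finitekop} to $\mathcal{H}$ through the polynomial approximation of Theorem \ref{thm: bdpolyappro}. First I would record the trace-class membership: for $p(\lambda)=\lambda^r$ the algebraic identity \eqref{eq: totalexpress}, now read in $\mathcal{B}(\mathcal{H})$, exhibits $p(H)-p(H_0)-Dp(H_0)\bullet V$ as $\sum_{j,k}H^{r-j-k-2}VH_0^kVH_0^j$, a finite sum of products of bounded operators with two Hilbert--Schmidt factors $V$, hence in $\mathcal{B}_1(\mathcal{H})$; linearity extends this to all polynomials. Fixing the sequence $\{P_n\}$ of Lemma \ref{lmma: finiteapp} and invoking Theorem \ref{thm: bdpolyappro}, the trace becomes $\lim_n Tr\{P_n[\,\cdots]P_n\}$.

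For each $n$ the bracketed expression lives on the finite-dimensional space $P_n\mathcal{H}$, with self-adjoint pair $(P_nH_0P_n,P_nHP_n)$ and perturbation $V_n\equiv P_nVP_n$, so Theorem \ref{thm: finitekop}(ii) supplies a non-negative $\eta_n\in L^1$ with $Tr\{P_n[\cdots]P_n\}=\int p''(\lambda)\eta_n(\lambda)d\lambda$ and $\|\eta_n\|_1=\frac12\|V_n\|_2^2$. Two uniformities are then immediate. The supports lie in the fixed interval $[a,b]$: for a unit vector $u\in P_n\mathcal{H}$ one has $\langle u,(P_nH_0P_n+sV_n)u\rangle=\langle u,H_0u\rangle+s\langle u,Vu\rangle\in[\inf\sigma(H_0)-\|V\|,\sup\sigma(H_0)+\|V\|]$, so every $H_s^{(n)}$ has spectrum in $[a,b]$ and hence $supp\,\eta_n\subseteq[a,b]$. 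And since $V_n\to V$ in $\|\cdot\|_2$ (because $\|P_n^\perp V\|_2\to0$ by Lemma \ref{lmma: finiteapp} and $\|VP_n^\perp\|_2=\|P_n^\perp V\|_2$ by self-adjointness of $V$), the masses obey $\|\eta_n\|_1\le\frac12\|V\|_2^2$ and $\|\eta_n\|_1\to\frac12\|V\|_2^2$.

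These non-negative functions, supported in a fixed compact interval with uniformly bounded mass, could a priori converge only to a measure; the crux is to produce an honest $L^1$ limit, and I would do this by the Dunford--Pettis route, i.e. by establishing uniform integrability of $\{\eta_n\}$. The tool is the double-operator-integral representation underlying the proof of Theorem \ref{thm: finitekop}(ii): letting $\epsilon\to0$ in \eqref{eq: dubspec} gives, for each $n$ and $s$, $Tr\{V_n[E_0(\lambda)-E_s(\lambda)]\}=-s\int\!\!\int\frac{\mathbf{1}_{(-\infty,\lambda]}(\alpha)-\mathbf{1}_{(-\infty,\lambda]}(\beta)}{\alpha-\beta}\,d\nu_s^{(n)}(\alpha,\beta)$, where $d\nu_s^{(n)}(\alpha,\beta)=Tr\{V_nE_0(d\alpha)V_nE_s(d\beta)\}$ is a non-negative measure of total mass $\|V_n\|_2^2\le\|V\|_2^2$ (positivity because $Tr\{V_nE_0(\Delta)V_nE_s(\delta)\}=\|E_0(\Delta)V_nE_s(\delta)\|_2^2$). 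Integrating in $\lambda$ over a Borel set $E$ and in $s$ over $[0,1]$, and using $|\int_E(\text{kernel})\,d\lambda|\le\min(1,|E|/|\alpha-\beta|)$, yields $\int_E\eta_n\le\int_0^1 s\int\!\!\int\min(1,|E|/|\alpha-\beta|)\,d\nu_s^{(n)}\,ds$. Splitting at $|\alpha-\beta|=\gamma$ bounds the right side by $(|E|/\gamma)\|V\|_2^2+\int_0^1 s\,\nu_s^{(n)}(\{|\alpha-\beta|<\gamma\})\,ds$, and I would then control the near-diagonal mass uniformly in $n$, exploiting the factor $s$ to absorb the small-$s$ regime (where $H_s^{(n)}$ is $O(s)$-close to $H_0$) together with a spectral-separation estimate for $s$ bounded away from $0$, so as to make the whole expression tend to $0$ as $|E|\to0$ uniformly in $n$. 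This uniform near-diagonal estimate is the main obstacle of the argument.

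Granting uniform integrability, Dunford--Pettis produces a subsequence with $\eta_n\rightharpoonup\eta$ weakly in $L^1$; the limit $\eta$ is non-negative and supported in $[a,b]$. Passing to the limit against $p''\in L^\infty$ gives $Tr\{p(H)-p(H_0)-Dp(H_0)\bullet V\}=\int_a^b p''(\lambda)\eta(\lambda)d\lambda$ for every polynomial, and testing against $1\in L^\infty$ gives $\int_a^b\eta=\lim\|\eta_n\|_1=\frac12\|V\|_2^2$, which is $\int_a^b|\eta|=\frac12\|V\|_2^2$ by non-negativity. Uniqueness follows at once: if $\eta$ and $\tilde\eta$ both represent the functional, then $\int_a^b q(\lambda)(\eta-\tilde\eta)(\lambda)d\lambda=0$ for every polynomial $q$ (as $p''$ ranges over all polynomials), whence $\eta=\tilde\eta$ a.e. by the Weierstrass approximation theorem.
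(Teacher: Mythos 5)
Your reduction to finite dimensions, the trace-class membership via \eqref{eq: totalexpress}, the localization of $supp\,\eta_n$ in $[a,b]$, and the mass computation $\|\eta_n\|_1=\frac12\|V_n\|_2^2\to\frac12\|V\|_2^2$ all match the paper. The proof breaks down, however, exactly at the step you yourself flag as ``the main obstacle'': the uniform integrability of $\{\eta_n\}$ needed for Dunford--Pettis is asserted but not established. Your reduction leaves you needing $\int_0^1 s\,\nu_s^{(n)}(\{|\alpha-\beta|<\gamma\})\,ds\to 0$ as $\gamma\to 0$ \emph{uniformly in $n$}, and this is not obviously true-and-provable by the sketch given: the measures $\nu_s^{(n)}$ concentrate near the diagonal precisely when $s$ is small or when $V_n$ nearly commutes with $H_{0,n}$, and the single factor of $s$ does not by itself produce a modulus of uniform integrability without a genuinely new spectral-separation argument. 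As written, the central analytic step of the theorem is missing. (A secondary point: even granting uniform integrability, Dunford--Pettis yields only weak $L^1$ convergence of a subsequence; that does suffice for the stated identity and for $\|\eta\|_1=\frac12\|V\|_2^2$, but it is weaker than what is actually true.)

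The paper closes this step by a different and more economical route, which you may want to adopt: it shows $\{\eta_n\}$ is Cauchy in $L^1$-\emph{norm} by duality. For $f\in L^\infty([a,b])$ with indefinite integral $g$, integration by parts in \eqref{eq: itaexp} gives
$\int_a^b f(\lambda)[\eta_n(\lambda)-\eta_m(\lambda)]\,d\lambda=\int_0^1 Tr\{V_n[g(H_{s,n})-g(H_{0,n})]-V_m[g(H_{s,m})-g(H_{0,m})]\}\,ds$,
and the Birman--Solomyak double-operator-integral bound (the same device you use for positivity) gives $\|g(A+W)-g(A)\|_2\le\|f\|_{\infty}\|W\|_2$, hence bounds the right-hand side by $\|f\|_\infty\|V\|_2$ times a combination of $\|P_n^{\bot}H_0P_n\|_2$, $\|P_m^{\bot}H_0P_m\|_2$, $\|P_n^{\bot}VP_n\|_2$, $\|P_m^{\bot}VP_m\|_2$ and $\|V_n-V_m\|_2$, all of which vanish as $n,m\to\infty$ by Lemma \ref{lmma: finiteapp}. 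Taking the supremum over $\|f\|_\infty\le 1$ yields $\|\eta_n-\eta_m\|_1\to 0$, so $\eta_n\to\eta$ in $L^1$ with no compactness argument needed. Your uniqueness argument via Weierstrass approximation is fine and is essentially the paper's moment argument.
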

\begin{proof}
 By Theorem \ref{thm: bdpolyappro} and Theorem \ref{thm: finitekop}, we have that
\begin{equation*}
\begin{split}
& Tr\{p(H)-p(H_0)-Dp(H_0)\bullet V\}\\
&\hspace{1cm}  =\lim_{n\rightarrow \infty} Tr\{P_n\left[p(P_nHP_n)-p(P_nH_0P_n)-Dp(P_nH_0P_n)\bullet P_nVP_n\right]P_n\} \\
& \hspace{1cm} = \lim_{n\rightarrow \infty} \int_a^b p^{''}(\lambda) \eta_n(\lambda)d\lambda,\\ 
\end{split}
\end{equation*}

with $\eta_n(\lambda)$ given by \eqref{eq: itaexp}, and $\|\eta_n\|_1 = \frac{1}{2} \|P_n(H-H_0)P_n\|_2^2$, which clearly converges to 
\vspace{0.05in}

$\frac{1}{2}\|V\|_2^2$ as $n\rightarrow \infty$. Set $V_n \equiv P_nVP_n$; ~$H_n \equiv P_nHP_n$; ~$H_{0,n} \equiv P_nH_0P_n$ and $E_{0,n}(.)$, $E_{s,n}(.)$
\vspace{0.05in}

are the spectral families of $H_{0,n}$ and $H_{s,n} \equiv P_nH_sP_n$ respectively. Following the idea 
\vspace{0.05in}

contained in the paper of Gestezy et.al (\cite{gesztesykop}), using the expression \eqref{eq: itaexp} of $\eta_n$ and using 
\vspace{0.05in}

Fubini's theorem to interchange the orders of integration and integrating by-parts, we 
\vspace{0.05in}

have for $f\in L^{\infty}([a,b])$ and $g(\lambda) = \itt_a^{\lambda}f(\mu)d\mu$ that
\begin{equation*}
\begin{split}
& \int_a^b f(\lambda)\left[\eta_n(\lambda) - \eta_m(\lambda)\right]d\lambda \\
& \hspace{1cm} = \int_0^1 ds~ \int_a^b g'(\lambda)~ Tr\{V_n\left[E_{0,n}(\lambda) - E_{s,n}(\lambda)\right]\\
& \hspace{6cm} - V_m\left[E_{0,m}(\lambda) - E_{s,m}(\lambda)\right]\}d\lambda\\
& \hspace{1cm} = \int_0^1 ds ~\{g(\lambda) ~Tr(V_n\left[E_{0,n}(\lambda) - E_{s,n}(\lambda)\right]\\
& \hspace{6cm} - V_m\left[E_{0,m}(\lambda) - E_{s,m}(\lambda)\right])\}|_{a}^{b}\\
& \hspace{3cm} -\int_0^1ds\int_a^b ~g(\lambda) ~Tr\{V_n\left[E_{0,n}(d\lambda) - E_{s,n}(d\lambda)\right]\\
& \hspace{8cm} - V_m\left[E_{0,m}(d\lambda) - E_{s,m}(d\lambda)\right]\}\\
\end{split}
\end{equation*}
\be\label{eq: inteq}
\hspace{-0.1cm} = \int_0^1 ds ~Tr\{V_n\left[g(H_{s,n}) - g(H_{0,n})\right] - V_m\left[g(H_{s,m}) - g(H_{0,m})\right]\},
\ee
where we have noted that all the boundary terms vanishes. Next we note as in \eqref{eq: doubspec} that 
$$g(H_0) - g(H_s)  = -s \int_a^b \int_a^b \frac{g(\alpha) - g(\beta)}{\alpha - \beta} \mathcal{G}(d\alpha \times d\beta).V,$$
where $\mathcal{G}$ as earlier, defines a $\mathcal{B}_2(\mathcal{H})$-valued spectral measure in $\mathbb{R}^2$ with total $\mathcal{B}_2(\mathcal{H})$-variation less than or
equal to 1. Therefore ~$\|g(H_s) - g(H_0)\|_2 \leq s~\|f\|_{\infty} ~\|V\|_2$ ~~since 
$$\hspace{-7cm} \sup_{\alpha, \beta \in [a,b]; \alpha \neq \beta} \left|\frac{g(\alpha) - g(\beta)}{\alpha - \beta}\right|\leq \|f\|_{\infty}.\quad \text{Similarly} \quad$$  
$$\hspace{-2cm} \left\|P_n\left[g(H_{s,n}) - g(H_{s})\right]P_n\right\|_2 \leq \|f\|_{\infty}\left(\left\|P_n^{\bot}H_0P_n\right\|_2 + s\left\|P_n^{\bot}VP_n\right\|_2\right)\quad \text{and} \quad$$ 
$\hspace{.5cm} \left\|P_n\left[g(H_{0,n}) - g(H_{0})\right]P_n\right\|_2 \leq \|f\|_{\infty}\left\|P_n^{\bot}H_0P_n\right\|_2.$ Therefore
\begin{equation*}
\begin{split}
& \left|\int_a^b f(\lambda)\left[\eta_n(\lambda) - \eta_m(\lambda)\right]d\lambda \right| \\ 
&\hspace{1cm}  = |\int_0^1 ds ~(Tr( V_n\{\left[g(H_{s,n}) - g(H_{0,n})\right] - \left[g(H_{s}) - g(H_{0})\right]\})\\
& \hspace{3cm} - Tr( V_m\{\left[g(H_{s,m}) - g(H_{0,m})\right] - \left[g(H_{s}) - g(H_{0})\right]\})\\
& \hspace{5cm} + Tr\{(V_n - V_m)\left[g(H_s) - g(H_0)\right]\})|\\
& \hspace{1cm} \leq \|f\|_{\infty} \|V\|_2 ~(\int_0^1 ds \{ 2 \left(\left\|P_n^{\bot}H_0P_n\right\|_2 + \left\|P_m^{\bot}H_0P_m\right\|_2\right)\\
& \hspace{4cm} + s\left(\left\|P_n^{\bot}VP_n\right\|_2 + \left\|P_m^{\bot}VP_m\right\|_2\right) + s\|V_n - V_m\|_2\}).\\
\end{split}
\end{equation*}
So, by Hahn-Banach theorem, $\{\eta_n\}$ is a Cauchy sequence of non-negative functions in $L^1([a,b])$ and hence there exists a non-negative $L^1([a,b])$-
function $\eta$ such that $\{\eta_n\}$ converges to $\eta$ in $L^1$-norm. Thus
$$ Tr\{p(H)-p(H_0)-Dp(H_0)\bullet V\} = \lim_{n\rightarrow \infty} \int_a^b p^{''}(\lambda) \eta_n(\lambda)d\lambda = \int_a^b p^{''}(\lambda)\eta(\lambda)d\lambda.$$
The uniqueness of $\eta$ follows from the uniqueness of a probability density, supported on a finite interval in $\mathbb{R}$, with a given sequence of
moments (\cite{krpbook}).
~~~~~~~~~~~~~~~~~~~~~~~~~~~~~~~~~~~~~~~~~~~~~~~~~~~~~~~~~~~~~~~\end{proof}

\begin{lmma}\label{lmma: expkoplmma}
Let $H$ and $H_0$ be two self-adjoint operators in an infinite dimensional separable Hilbert space $\mathcal{H}$ such that 
$H-H_0 \equiv V\in \mathcal{B}_2(\mathcal{H})$. Then $ e^{itH} - e^{itH_0} - D(e^{itH_0})\bullet V \in \mathcal{B}_1(\mathcal{H})$ and there exists a unique non-negative $L^1(\mathbb{R})$-function $\eta$ such that
$$ Tr\{e^{itH} - e^{itH_0} - D(e^{itH_0})\bullet V\} = (it)^2 \int_{\mathbb{R}} e^{it\lambda} \eta(\lambda) d\lambda.$$
\end{lmma}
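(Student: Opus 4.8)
The plan is to run the finite-dimensional approximation exactly as in the proof of Theorem \ref{thm: finalbddthm}, but with the exponential reduction of Theorem \ref{thm: expapps} in place of the polynomial one and part (iii) of Theorem \ref{thm: finitekop} in place of the polynomial formula. First I would record the trace-class membership: the identity \eqref{eq: expfrechex} already exhibits $e^{itH}-e^{itH_0}-D(e^{itH_0})\bullet V$ as $(it)^2\int_0^1\alpha\,d\alpha\int_0^1 d\beta\,e^{it\alpha\beta H}Ve^{it\alpha(1-\beta)H_0}Ve^{it(1-\alpha)H_0}$, a norm-convergent integral of products of two Hilbert--Schmidt operators with bounded flanking factors, hence an element of $\mathcal{B}_1(\mathcal{H})$. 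Taking the sequence $\{P_n\}$ from Lemma \ref{lmma: finiteapp} and applying Theorem \ref{thm: expapps} followed by Theorem \ref{thm: finitekop}(iii) to the finite-dimensional triple $(P_nH_0P_n,P_nHP_n,V_n)$ with $V_n=P_nVP_n$, I obtain non-negative functions $\eta_n$, given by \eqref{eq: itaexp} for the compressions and (by Theorem \ref{thm: finitekop}) supported on compact, in general growing, intervals $[a_n,b_n]$, such that
$$Tr\{e^{itH}-e^{itH_0}-D(e^{itH_0})\bullet V\}=\lim_{n\to\infty}(it)^2\int_{\mathbb{R}}e^{it\lambda}\eta_n(\lambda)\,d\lambda,$$
with $\|\eta_n\|_1=\frac{1}{2}\|V_n\|_2^2\to\frac{1}{2}\|V\|_2^2$ since $V_n\to V$ in $\mathcal{B}_2(\mathcal{H})$.

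The core of the argument is to upgrade this to convergence of $\{\eta_n\}$ in $L^1(\mathbb{R})$, which is what forces the limiting object to be an $L^1$ density rather than merely a weak-$*$ limiting measure. I would mimic the Cauchy estimate of Theorem \ref{thm: finalbddthm}: for a fixed pair $n,m$ both $\eta_n$ and $\eta_m$ are compactly supported, so by $L^1$--$L^\infty$ duality it suffices to bound $\int f(\eta_n-\eta_m)$ uniformly over $f\in L^\infty(\mathbb{R})$ with $\|f\|_\infty\le1$ supported on a common interval $[A,B]\supseteq[a_n,b_n]\cup[a_m,b_m]$. Setting $g(\lambda)=\int_A^\lambda f(\mu)\,d\mu$ produces a bounded function, Lipschitz with constant $\le\|f\|_\infty$, that vanishes below $A$ and is constant above $B$; since the spectral differences $E_{0,n}(\lambda)-E_{s,n}(\lambda)$ vanish for $\lambda\ge B$ and $g(A)=0$, integration by parts in \eqref{eq: itaexp} produces no boundary terms and yields the identity \eqref{eq: inteq}, namely
$$\int_{\mathbb{R}}f(\eta_n-\eta_m)=\int_0^1 ds\,Tr\{V_n[g(H_{s,n})-g(H_{0,n})]-V_m[g(H_{s,m})-g(H_{0,m})]\}.$$
Inserting $\pm[g(H_s)-g(H_0)]$ as in Theorem \ref{thm: finalbddthm} and using the double-operator-integral bound $\|g(H_s)-g(H_0)\|_2\le s\|f\|_\infty\|V\|_2$ together with the compression bounds reduces the whole expression to a multiple of $\|V\|_2$ times the quantities $\|P_n^{\bot}H_0P_n\|_2$, $\|P_n^{\bot}VP_n\|_2$ and $\|V_n-V_m\|_2$, all of which tend to $0$ by Lemma \ref{lmma: finiteapp} and Remark \ref{rmk: sequence}. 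Hence $\{\eta_n\}$ is Cauchy in $L^1(\mathbb{R})$ and converges to a non-negative $\eta\in L^1(\mathbb{R})$.

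The step I expect to be the main obstacle is the operator-Lipschitz estimate
$$\|P_n[g(H_{s,n})-g(H_s)]P_n\|_2\le\|f\|_\infty\bigl(\|P_n^{\bot}H_0P_n\|_2+s\|P_n^{\bot}VP_n\|_2\bigr)$$
in the present \emph{unbounded} setting, where, unlike in Theorem \ref{thm: finalbddthm}, the defect $H_{s,n}-H_s=P_nH_sP_n-H_s$ fails to be Hilbert--Schmidt and $g$ need not be bounded a priori. To handle it I would first verify the estimate on resolvents $g_z(\lambda)=(\lambda-z)^{-1}$, $\mathrm{Im}\,z\ne0$, using the resolvent identity together with the algebraic simplification $[P_nH_sP_n-H_s](H_{s,n}-z)^{-1}P_n=-P_n^{\bot}H_sP_n(H_{s,n}-z)^{-1}P_n$, whence the Hilbert--Schmidt norm is controlled by $|\mathrm{Im}\,z|^{-2}\|P_n^{\bot}H_sP_n\|_2$ with $\|P_n^{\bot}H_sP_n\|_2\le\|P_n^{\bot}H_0P_n\|_2+s\|P_n^{\bot}VP_n\|_2$; the general bounded Lipschitz $g$ then follows from the Birman--Solomyak double-operator-integral calculus (for instance through a Helffer--Sj\"ostrand representation), which is valid for unbounded self-adjoint operators.

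Granting this, $\eta_n\to\eta$ in $L^1(\mathbb{R})$, and since $|e^{it\lambda}|=1$ the limit passes through the integral to give $Tr\{e^{itH}-e^{itH_0}-D(e^{itH_0})\bullet V\}=(it)^2\int_{\mathbb{R}}e^{it\lambda}\eta(\lambda)\,d\lambda$ for every $t$. Finally, uniqueness is cleaner here than in the bounded moment-problem argument: if $\eta$ and $\tilde\eta$ both satisfy the formula for all $t\in\mathbb{R}$, then dividing by $(it)^2$ for $t\ne0$ shows that their Fourier transforms coincide, so $\eta=\tilde\eta$ almost everywhere by injectivity of the Fourier transform on $L^1(\mathbb{R})$.
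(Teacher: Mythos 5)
Your proposal is correct and follows the same architecture as the paper's proof: reduce to finite dimensions via Theorem \ref{thm: expapps}, apply Theorem \ref{thm: finitekop}(iii) to the compressions to obtain compactly supported non-negative densities $\eta_n$, show $\{\eta_n\}$ is Cauchy in $L^1(\mathbb{R})$ by pairing against $L^\infty$ functions and combining the integration-by-parts identity \eqref{eq: inteq} with double-operator-integral Lipschitz bounds, and get uniqueness from Fourier inversion. The one point of genuine divergence is the step you yourself flag as the main obstacle, the possible unboundedness of the antiderivative $g$. The paper keeps $f\in L^{\infty}(\mathbb{R})$ arbitrary, so $g$ may grow linearly; it splits $f$ into real and imaginary parts and subtracts the linear term $\|f\|_{\infty}\lambda$ to reduce to the bounded Lipschitz case, paying a factor $2$ in the final estimate. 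You instead observe that for fixed $n,m$ the difference $\eta_n-\eta_m$ is compactly supported, so the $L^1$--$L^{\infty}$ duality only requires $f$ supported on a common compact interval, which makes $g$ bounded outright; the residual difficulty, the bound on $P_n\left[g(H_{s,n})-g(H_s)\right]P_n$, you treat via resolvent identities and Helffer--Sj\"{o}strand, where the paper merely asserts the analogous estimate with the word \emph{similarly}. Both routes ultimately rest on the same Birman--Solomyak fact that Lipschitz functions are operator-Lipschitz in the Hilbert--Schmidt norm for possibly unbounded self-adjoint operators, and your localization is legitimate because the resulting bound depends only on the Lipschitz constant of $g$, not on its supremum, hence is uniform over the growing support intervals; it is arguably the cleaner device, at the cost of invoking somewhat heavier machinery for the compression estimate.
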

\begin{proof}
The uniqueness part is trivial, since if not let $\eta_1$ and $\eta_2$ be two such functions so that
$$\int_{\mathbb{R}} e^{it\lambda}\left[\eta_1(\lambda) - \eta_2(\lambda)\right] d\lambda = 0 ~~\forall ~~t\in \mathbb{R} \quad \text{and} \quad \eta_1-\eta_2\in L^1(\mathbb{R}).$$

Then by Fourier Inversion Theorem we conclude that $\eta_1 = \eta_2$ a.e. By Theorem \ref{thm: expapps} we 
\vspace{0.05in}

conclude that, there exists a sequence $\{P_n\}$ of finite rank projections such that
\begin{equation}\label{eq: finiteapprox}
Tr\{e^{itH} - e^{itH_0} - D(e^{itH_0})\bullet V\} = \lim_{n\rightarrow \infty} Tr\{P_n\left[e^{itH_n} - e^{itH_{0,n}} - D(e^{itH_{0,n}})\bullet V_n\right]P_n\},
\end{equation}

where $H_n \equiv P_nHP_n,~ H_{0,n} \equiv P_nH_0P_n$ and $V_n \equiv P_nVP_n$, and the convergence is uniform  
\vspace{0.1in}

in $t$ for $|t| \leq T$. Note that by construction $P_n\mathcal{H} \subseteq Dom(H_0) = Dom(H)$ (see proof of   
\vspace{0.05in}

Proposition \ref{prop: weylabs}) and hence both $H_n$ and $H_{0,n}$ are self-adjoint operators in the finite 
\vspace{0.05in}

dimensional space $P_n\mathcal{H}$. By \eqref{eq: finiteexp}, there exists a unique non-negative $\eta_n \in L^1(\mathbb{R})$ such 

that 
\begin{equation}\label{eq: finitekop}
 Tr\{P_n\left[e^{itH_n} - e^{itH_{0,n}} - D(e^{itH_{0,n}})\bullet V_n\right]P_n\} = (it)^2\int_{-\infty}^{\infty} e^{it\lambda} \eta_n(\lambda)d\lambda,
\end{equation}

and hence
\begin{equation}\label{eq: finiteexpress}
 Tr\{e^{itH} - e^{itH_0} - D(e^{itH_0})\bullet V\} = (it)^2 \lim_{n\rightarrow \infty}\int_{-\infty}^{\infty} e^{it\lambda} \eta_n(\lambda)d\lambda,
\end{equation}

the convergence being uniform in $t$ for $|t|\leq T$. In order to prove the $L^1(\mathbb{R})$-convergence 
\vspace{0.05in}

of $\{\eta_n\}$, we essentially repeat the procedure in the last part of Section \ref{sec: third} except that one
\vspace{0.05in}

needs to take into account the possibility that the indefinite integral $g$ of a $L^{\infty}(\mathbb{R})$- 
\vspace{0.05in}

function $f$ may have a linear part, which will make $g(H_0)$ and $g(H)$ unbounded operators. 
\vspace{0.01in}

Let $f = f_1 + if_2 \in L^{\infty}(\mathbb{R})$so that $f_j \in L^{\infty}(\mathbb{R})~~(j=1,2)$ with $\|f_j\|_{\infty} \leq \|f\|_{\infty}$ and if we 

set $$g(\lambda) = \int_0^{\lambda} f(\mu)d\mu + C = \int_0^{\lambda} \{f_1(\mu) + if_2(\mu)\}d\mu + (C_1+iC_2) = g_1(\lambda) + g_2(\lambda),$$

where $g_j(\lambda) = \itt_0^{\lambda}f_j(\mu)d\mu + C_j$ (for $j=1,2$) are real valued functions and ~$C_1, C_2$~ are 

some real constants. Then ~$-\|f\|_{\infty}|\lambda| + C_j ~\leq ~g_j(\lambda) ~\leq ~\|f\|_{\infty}|\lambda| + C_j$ (for $j=1,2$), and 
\vspace{0.01in}

by functional calculus we conclude that for any self-adjoint operator $A$, $Dom\left(g_j(A)\right) = $ 
\vspace{0.05in}

$Dom(A)$ and $g_j(A) - \|f\|_{\infty}A \in \mathcal{B}(\mathcal{H})$, for $j=1,2$. 

Thus
$$\hspace{-5cm} g_j(H_0+sV) - g_j(H_0) = \{ \left[g_j(H_0+sV) - \|f\|_{\infty} (H_0+sV)\right]$$ 
\hspace{8cm} $- \left[g_j(H_0) - \|f\|_{\infty} H_0\right] + \|f\|_{\infty} s V\} ~~\in \mathcal{B}(\mathcal{H}),$
\vspace{0.05in}

for $j=1,2$. By a similar calculation as in the proof of Theorem \ref{thm: finalbddthm}, it follows that 
$$g_j(H_0+sV) - g_j(H_0) \in \mathcal{B}_2(\mathcal{H})\quad \text{and} \quad \left\|g_j(H_0+sV) - g_j(H_0)\right\|_2 \leq s \|f\|_{\infty} \|V\|_2,$$

for $j=1,2$. Since $g=g_1 + ig_2$, we conclude that 
$$g(H_0+sV) - g(H_0) \in \mathcal{B}_2(\mathcal{H})\quad \text{and} \quad \left\|g(H_0+sV) - g(H_0)\right\|_2 \leq 2s \|f\|_{\infty} \|V\|_2.$$ 

Similarly, $$\left\|P_n\left[g(H_{s,n}) - g(H_{s})\right]P_n\right\|_2 \leq 2 \|f\|_{\infty}\left(\left\|P_n^{\bot}H_0P_n\right\|_2 + s\left\|P_n^{\bot}VP_n\right\|_2\right)\quad \text{and} \quad$$ 
$\left\|P_n\left[g(H_{0,n}) - g(H_{0})\right]P_n\right\|_2 \leq ~2\|f\|_{\infty}\left\|P_n^{\bot}H_0P_n\right\|_2.$ Also we get that 
$$\int_{\mathbb{R}} f(\lambda)\left[\eta_n(\lambda) - \eta_m(\lambda)\right]d\lambda = \int_0^1 ds ~Tr\{V_n\left[g(H_{s,n}) - g(H_{0,n})\right] - V_m\left[g(H_{s,m}) - g(H_{0,m})\right]\},$$
with the boundary term vanishing because for fixed finite $m$ and $n$, the support of the spectral measures involved are compact. Therefore
\begin{equation*}
\begin{split}
& \left|\int_a^b f(\lambda)\left[\eta_n(\lambda) - \eta_m(\lambda)\right]d\lambda \right| \\ 
&\hspace{1cm}  = |\int_0^1 ds ~(Tr( V_n\{\left[g(H_{s,n}) - g(H_{0,n})\right] - \left[g(H_{s}) - g(H_{0})\right]\})\\
& \hspace{3cm} - Tr( V_m\{\left[g(H_{s,m}) - g(H_{0,m})\right] - \left[g(H_{s}) - g(H_{0})\right]\})\\
& \hspace{5cm} + Tr\{(V_n - V_m)\left[g(H_s) - g(H_0)\right]\})|\\
& \hspace{1cm} \leq 2 \|f\|_{\infty} \|V\|_2 ~(\int_0^1 ds \{ 2 \left(\left\|P_n^{\bot}H_0P_n\right\|_2 + \left\|P_m^{\bot}H_0P_m\right\|_2\right)\\
& \hspace{4cm} + s\left(\left\|P_n^{\bot}VP_n\right\|_2 + \left\|P_m^{\bot}VP_m\right\|_2\right) + s\|V_n - V_m\|_2\}).\\
\end{split}
\end{equation*}
Therefore, by Remark \ref{rmk: sequence} and the Hahn-Banach theorem, $\{\eta_n\}$ is a Cauchy sequence of non-negative functions in $L^1(\mathbb{R})$ and hence there exists a non-negative $L^1(\mathbb{R})$-
function $\eta$ such that $\{\eta_n\}$ converges to $\eta$ in $L^1$-norm. Thus
$$Tr\{e^{itH} - e^{itH_0} - D(e^{itH_0})\bullet V\} = (it)^2 \lim_{n\rightarrow \infty}\int_{\mathbb{R}} e^{it\lambda} \eta_n(\lambda)d\lambda = (it)^2 \int_{\mathbb{R}}e^{it\lambda}\eta(\lambda)d\lambda.$$

~~~~~~~~~~~~~~~~~~~~~~~~~~~~~~~~~~~~~~~~~~~~~~~~~~~~~~~~~~~~~~~~~~~~~~~~~~~~~~~~~~~~~~~~~~~~~~~~~~~~~~~~~~~~~~~~~~~~~~~~~~~~~~~~~~~~~\end{proof}

\begin{thm}
Let $H$ and $H_0$ be two self-adjoint operators in an infinite dimensional separable Hilbert space $\mathcal{H}$ such that 
$H-H_0 \equiv V\in \mathcal{B}_2(\mathcal{H})$ and $f\in \mathcal{S}(\mathbb{R})$(the Schwartz class of smooth functions of rapid decrease). 
Then $f(H) - f(H_0) - Df(H_0)\bullet V \in \mathcal{B}_1(\mathcal{H})$ and
\begin{equation*}
Tr\{f(H) - f(H_0) - Df(H_0)\bullet V\} = \int_{\mathbb{R}}f^{''}(\lambda)\eta(\lambda)d\lambda,
\end{equation*}
where $\eta$ is a unique non-negative $L^1(\mathbb{R})$ -function with $\| \eta \|_1 = \frac{1}{2} \|V\|_2^2.$
\end{thm}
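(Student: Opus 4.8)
The plan is to reduce the Schwartz-class statement to the exponential case already settled in Lemma \ref{lmma: expkoplmma} by means of the Fourier representation of $f$. Since $f\in\mathcal{S}(\mathbb{R})$, its Fourier transform $\hat f$ also lies in $\mathcal{S}(\mathbb{R})$, and one has the inversion formula $f(\lambda)=\frac{1}{2\pi}\int_{\mathbb{R}}\hat f(t)e^{it\lambda}dt$, whence $f''(\lambda)=\frac{1}{2\pi}\int_{\mathbb{R}}\hat f(t)(it)^2e^{it\lambda}dt$. Applying the functional calculus and differentiating under the integral sign (legitimate because $\|D(e^{itH_0})\bullet V\|_2\leq |t|\,\|V\|_2$ and $\int_{\mathbb{R}}|\hat f(t)|\,|t|\,dt<\infty$), I would first record the operator identity
$$ f(H)-f(H_0)-Df(H_0)\bullet V=\frac{1}{2\pi}\int_{\mathbb{R}}\hat f(t)\left[e^{itH}-e^{itH_0}-D(e^{itH_0})\bullet V\right]dt, $$
understood as a Bochner integral.

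The crucial ingredient is the uniform trace-norm bound that follows from \eqref{eq: expfrechex}: writing the integrand there as two Hilbert--Schmidt factors $V$ sandwiched between unitary exponentials and estimating via $\|XY\|_1\leq\|X\|_2\|Y\|_2$, one obtains $\|e^{itH}-e^{itH_0}-D(e^{itH_0})\bullet V\|_1\leq \frac{t^2}{2}\|V\|_2^2$. Because $\hat f\in\mathcal{S}(\mathbb{R})$ gives $\int_{\mathbb{R}}|\hat f(t)|\,t^2\,dt<\infty$, the displayed Bochner integral converges absolutely in $\mathcal{B}_1(\mathcal{H})$; this simultaneously proves $f(H)-f(H_0)-Df(H_0)\bullet V\in\mathcal{B}_1(\mathcal{H})$ and lets me pull the trace inside:
$$ Tr\{f(H)-f(H_0)-Df(H_0)\bullet V\}=\frac{1}{2\pi}\int_{\mathbb{R}}\hat f(t)\,Tr\{e^{itH}-e^{itH_0}-D(e^{itH_0})\bullet V\}\,dt. $$
I would then invoke Lemma \ref{lmma: expkoplmma} to replace the inner trace by $(it)^2\int_{\mathbb{R}}e^{it\lambda}\eta(\lambda)d\lambda$, apply Fubini (justified since $\int_{\mathbb{R}}|\hat f(t)|\,t^2\,dt\cdot\|\eta\|_1<\infty$) to interchange the $t$- and $\lambda$-integrals, and recognize $\frac{1}{2\pi}\int_{\mathbb{R}}\hat f(t)(it)^2e^{it\lambda}dt=f''(\lambda)$, arriving at $\int_{\mathbb{R}}f''(\lambda)\eta(\lambda)d\lambda$ with $\eta$ the very function produced in Lemma \ref{lmma: expkoplmma}.

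It remains to address non-negativity, the norm identity, and uniqueness. Non-negativity is inherited directly from Lemma \ref{lmma: expkoplmma}, ultimately from the double-operator-integral positivity argument of Theorem \ref{thm: finitekop}. For $\|\eta\|_1=\frac12\|V\|_2^2$ I would use that $\eta$ is the $L^1$-limit of the finite-dimensional non-negative densities $\eta_n$, whose norms $\|\eta_n\|_1=\frac12\|P_nVP_n\|_2^2$ converge to $\frac12\|V\|_2^2$ (since $P_nVP_n\to V$ in $\mathcal{B}_2(\mathcal{H})$ by Remark \ref{rmk: sequence}); $L^1$-convergence then forces $\|\eta\|_1=\lim_n\|\eta_n\|_1=\frac12\|V\|_2^2$. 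Finally, uniqueness follows because if $\int_{\mathbb{R}}f''(\lambda)\,[\eta_1(\lambda)-\eta_2(\lambda)]\,d\lambda=0$ for all $f\in\mathcal{S}(\mathbb{R})$, then the $L^1$-function $\eta_1-\eta_2$ annihilates every Schwartz $\psi$ with $\hat\psi(0)=\hat\psi'(0)=0$, so its Fourier transform is supported at the origin and $\eta_1-\eta_2$ is a polynomial, which being integrable must vanish a.e.

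The main obstacle is simply securing the uniform estimate $\|e^{itH}-e^{itH_0}-D(e^{itH_0})\bullet V\|_1\le\frac{t^2}{2}\|V\|_2^2$ from \eqref{eq: expfrechex} and pairing it with the rapid decay of $\hat f$; once this is in hand, the trace-class membership, the passage of the trace through the Bochner integral, and the Fubini interchange are all routine, so no genuinely new analytic difficulty arises beyond Lemma \ref{lmma: expkoplmma}.
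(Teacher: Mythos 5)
Your proposal is correct and follows essentially the same route as the paper: represent $f$ and $Df(H_0)\bullet V$ via the Fourier transform, use the expression \eqref{eq: expfrechex} together with the rapid decay of $\hat f$ to get trace-class convergence of the Bochner integral, pull the trace inside, and invoke Lemma \ref{lmma: expkoplmma} plus Fubini. The only differences are cosmetic (a $2\pi$ normalization convention) and that you supply details the paper leaves implicit, namely the explicit bound $\|e^{itH}-e^{itH_0}-D(e^{itH_0})\bullet V\|_1\leq \tfrac{t^2}{2}\|V\|_2^2$, the verification of $\|\eta\|_1=\tfrac12\|V\|_2^2$ via the $L^1$-limit of the $\eta_n$, and a direct uniqueness argument for the Schwartz class (the paper instead relies on the uniqueness already established in Lemma \ref{lmma: expkoplmma}).
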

\begin{proof}
By the spectral theorem and an application of Fubini's theorem, we get that
\begin{equation*}
\hspace{-4cm} f(H) = \int_{\mathbb{R}} \hat{f}(t)e^{itH} dt~ ,~f(H_0) = \int_{\mathbb{R}} \hat{f}(t)e^{itH_0} dt \quad \text{and} \quad 
\end{equation*}
$$\hspace{2cm} Df(H_0)\bullet V = \int_{\mathbb{R}} \hat{f}(t)\left[D\left(e^{itH_0}\right)\bullet V\right]dt.$$
Thus using the expression \eqref{eq: expfrechex}, and the fact that $\hat{f}\in \mathcal{S}(\mathbb{R})$, and Fubini's theorem we conclude that 
$f(H) - f(H_0) - Df(H_0)\bullet V \in \mathcal{B}_1(\mathcal{H})$ and
\begin{equation*}
\begin{split}
& Tr\{f(H) - f(H_0) - Df(H_0)\bullet V\} = \int_{\mathbb{R}}\hat{f}(t) ~Tr\{e^{itH} - e^{itH_0} - D(e^{itH_0})\bullet V\}~dt \\
&\hspace{6.1cm}  = \int_{\mathbb{R}} \hat{f}(t)\left((it)^2\int_{\mathbb{R}}e^{it\lambda}\eta(\lambda)d\lambda\right)dt = \int_{\mathbb{R}} f^{''}(\lambda)\eta(\lambda)d\lambda,\\
\end{split}
\end{equation*}
where $\eta$ is the one obtained in Lemma \ref{lmma: expkoplmma}.
~~~~~~~~~~~~~~~~~~~~~~~~~~~~~~~~~~~~~~~~~~~~~~~~~~~~~~~~~~~~~~~~~~~~~~~~~~~~~\end{proof}

\begin{rmrk}
If $f(\lambda) = \itt_{-\infty}^{\infty}\frac{e^{it\lambda} - 1 - it\lambda}{(it)^2}\nu(dt) + \widetilde{C_1}\lambda + \widetilde{C_2}$,

\hspace{-0.8cm} where $\widetilde{C_1},\widetilde{C_2}$ are some constants and 
$\nu$ is a complex measure, then $f(H)$ and $f(H_0)$ are unbounded operators where $Domf(H), Domf(H_0)$ are contained in $Dom(H^2),Dom(H_0^2)$ respectively.
It may so happen that $Dom(H^2) \bigcap Dom(H_0^2)$ is not dense and in that case $f(H) - f(H_0)$ is not well-defined. But on the other hand,
\begin{equation*}
\begin{split}
& f(H) - f(H_0) - Df(H_0)\bullet V  = \int_{-\infty}^{\infty} \frac{1}{(it)^2}\left[e^{itH} - e^{itH_0} - D(e^{itH_0})\bullet V\right]\nu(dt) \\
&\hspace{5.2cm}  = \int_{-\infty}^{\infty}\nu(dt) \int_0^1 \alpha d\alpha \int_0^1 d\beta ~e^{it\alpha \beta H}Ve^{it\alpha (1-\beta)H_0}Ve^{it(1-\alpha) H_0},\\
\end{split}
\end{equation*}
in which the right hand side is a well-defined $\mathcal{B}_1(\mathcal{H})$ operator and one will have the result in this extended sense (i.e. whenever the expression in $\{.\}$
is densely defined):
$$Tr\{f(H) - f(H_0) - Df(H_0)\bullet V\} = \int_{-\infty}^{\infty}f^{''}(\lambda)\eta(\lambda)d\lambda.$$
\end{rmrk}

\noindent\textbf{Acknowledgment:} The authors would like to thank Council of Scientific and Industrial Research (CSIR), Government of India 
for a research and Bhatnagar Fellowship respectively. The authors also thank UK-India Education and Research Initiative ( UKIERI) project for support.


\begin{thebibliography} {99}
\small

\bibitem{Bhatia}
BHATIA,~R.
\newblock \emph{Matrix Analysis}, Springer, New York ,1997.

\bibitem{birmansolomyak}
BIRMAN,~M.~S. and SOLOMYAK,~M.~Z. 
\newblock Remarks on The Spectral Shift Function,
\newblock \emph{Zap.Nauch.Sem.Len. Otdel..Mat.Instt.Steklova,Akad Nauk.SSSR }27 (1972) 33-46, (English translation: J.Sov.Math.3(4) (1975) 408-419).

\bibitem{birsolomyak}
BIRMAN,~M.~S. and SOLOMYAK,~M.~Z.
\newblock Double Operator Integrals in a Hilbert Space,
\newblock \emph{Integral Equations Operator Theory} 47 (2003), 131-168.

\bibitem{Boyadzhiev}
BOYADZHIEV,~K.~N.
\newblock Mean Value Theorems for Traces.
\newblock \emph{Math. Japonica}, 38, No. 2(1993), 217-224.

\bibitem{DykemmaSkripkka}
DYKEMA,~K. and SKRIPKA,~A.
\newblock Higher order spectral shift,
\newblock \emph{J. Funct. Anal.}, 257 (2009), 1092-1132.

\bibitem{gesztesykop}
GESZTESY,~F. ~PUSHNITSKI,~A. and SIMON,~B.
\newblock On The Kopleinko Spectral Shift Function, I.Basics
\newblock \emph{Zh.Mat.Fiz.Anal.Geom}.,4,1 (2008),63-107.

\bibitem{kato}
KATO,~T.
\newblock \emph{Perturbation Theory of Linear Operators} (2nd ed.), New York, Springer Verlag, 1976.

\bibitem{kopleinko}
KOPLIENKO,~L.~S.
\newblock Trace Formula For Nontrace-Class Perturbations,
\newblock \emph{Sibirsk. Mat.Zh.,} 25,5 (1984), 6-21 (Russian).English Translation: Siberian Math.J.,25,5 (1984),735-743.

\bibitem{mgkrein}
KREIN,~M.~G.
\newblock On certain new studies in the perturbation theory for self-adjoint operators,
\newblock \emph{Topics in Differential and Integral equations and Operator theory}, (Ed.I Gohberg), OT 7 (Basel: Birkhauser-Verlag) 1983, pp.107-172.

\bibitem{krpbook}
PARTHASARATHY,~K.~R.
\newblock \emph{Introduction to Probability and Measure}, Macmillan, Delhi,1977.

\bibitem{Sinhamohapatra}
SINHA,~K.~B. and MOHAPATRA,~A.~N.
\newblock Spectral Shift Function and Trace Formula.
\newblock \emph{Proc.Indian Acad.Sci.}(Math.Sci.),Vol. 104, No.4, November 1994.pp. 819-853.

\bibitem{smunitary}
SINHA,~K.~B. and MOHAPATRA,~M.~N.
\newblock Spectral Shift Function And Trace Formula For Unitaries- A New Proof,
\newblock \emph{Integr Equat Oper Th} Vol. 24 (1996).

\bibitem{Skripkkaunbounded}
SKRIPKA,~A.
\newblock Higher order spectral shift, II. Unbounded case,
\newblock \emph{Indiana Univ. Math. J.}, 59 (2010), No.2, 691-706.


\bibitem{Voiculescu}
VOICULESCU,~D.
\newblock On a Trace Formula of M.G.Krein.
\newblock \emph{Operator Theory:Advances and Applications,Vol}.24(1987), 329-332.(Basel:Birkhauser-Verlag).


\end{thebibliography}
\end{document}